\newcommand{\iter}{i} 
\newcommand{\Xk}{\X_{\iter}}
\newcommand{\Xkk}{\X_{\iter+1}}
\newcommand{\Xold}{\X_{\iter-1}}
\newcommand{\Yk}{\Y_{\iter}}
\newcommand{\Ykk}{\Y_{\iter+1}}
\newcommand{\la}{\leftarrow}
\newcommand{\Aforward}{\texttt{A}}
    \newcommand{\adj}{*} 
\newcommand{\Aadjoint}{\texttt{At}}
\newcommand{\Aadjointadjoint}{\texttt{At}^\adj}
\newcommand{\SVDfcn}{\texttt{RandomizedSVD}} 
\newcommand{\EIGfcn}{\texttt{RandomizedEIG}} 
\newcommand{\SVDlowrank}{\texttt{FactoredSVD}}
\newcommand{\QR}{\texttt{QR}}
\newcommand{\hh}{\texttt{h}}
\newcommand{\hhAdjoint}{\texttt{h}^\adj}
\newcommand{\resid}{{\bf z}}
\newcommand{\E}{\mathbb{E}}
\newcommand{\CC}{\mathcal{C}}
\def\RR{\mathbb{R}}
\newcommand{\obs}{\mathbf y}
\newcommand{\x}{\mathbf X}
\newcommand{\X}{\mathbf X}
\newcommand{\Y}{\mathbf Y} 
\newcommand{\linmap}{\boldsymbol{\mathcal{A}}}
\newcommand{\order}{\mathcal{O}}
\newcommand{\numsam}{p}
\newcommand{\rEst}{\hat{r}}
\newcommand{\sensing}{\linmap}
\newcommand{\noise}{\boldsymbol{\varepsilon}}
\newcommand{\vectornorm}[1]{\|#1\|}
\newcommand{\proj}{\mathcal{P}}
\DeclareMathOperator{\diag}{diag}
\newcommand{\C}{\mathbb C}
\newtheorem{corollary}{Corollary}
\newcommand{\bigO}{\mathcal O}
\newcommand{\<}{\langle}
\renewcommand{\>}{\rangle}
\newtheorem{definition}{Definition} 
\newtheorem{theorem}{Theorem}
\newcommand{\bitem}{\begin{itemize}}
\newcommand{\eitem}{\end{itemize}}
\DeclareMathOperator*{\argmin}{argmin}
\DeclareMathOperator{\rank}{rank}
\newcommand{\beqn}{\begin{equation}}
\newcommand{\eeqn}{\end{equation}}
\newcommand{\balign}{\begin{align}}
\newcommand{\ealign}{\end{align}}
\newcommand{\R}{ \mathbb{R} }     
\def\proj { \mathcal{P} } 
\DeclareMathOperator{\tr}{trace}
\DeclareMathOperator{\VEC}{vec}
\long\def\symbolfootnote[#1]#2{\begingroup%
\def\thefootnote{\fnsymbol{footnote}}\footnote[#1]{#2}\endgroup}
\begin{document}
\title{Randomized Low-Memory Singular Value Projection}

\newcommand{\vectornormbig}[1]{\big\|#1\big\|}
\newcommand{\vectornormmed}[1]{\big\|#1\big\|}
\date{May 17, 2013}

\author{Stephen Becker\thanks{stephen.becker@upmc.fr, Laboratoire JLL, UPMC Paris 6, Paris} \and 
Volkan Cevher\thanks{\{volkan.cevher, anastasios.kyrillidis\}@epfl.ch, LIONS, \'Ecole polytechnique F\'ed\'erale de Lausanne} \and
Anastasios Kyrillidis\footnotemark[2]\ \thanks{Authors are listed in alphabetical order}}
\maketitle

\begin{abstract} 
Affine rank minimization algorithms typically rely on calculating the
gradient of a data error followed by a singular value decomposition at every
iteration. Because these two steps are expensive, heuristic approximations are
often used to reduce computational burden.  To this end, we propose a recovery
scheme that merges the two steps with randomized approximations, and as a
result, operates on space proportional to the degrees of freedom in the
problem. We theoretically establish the estimation guarantees of the algorithm
as a function of approximation tolerance. 
While the theoretical approximation requirements are overly pessimistic,
we demonstrate that in practice the algorithm performs well on the quantum
tomography recovery problem. 
\end{abstract}

\section{Introduction}\label{sec:intro}
In many signal processing and machine learning applications, we are given a set of observations $ \obs \in \mathbb{R}^{\numsam} $ of a rank-$r$ matrix $ \X^\star \in  \mathbb{R}^{m\times n} $ as $ \obs = \linmap \X^\star+ \noise $ via the linear operator $ \linmap: \mathbb{R}^{m\times n} \rightarrow \mathbb{R}^\numsam $, where $ r  \ll \min\lbrace m, n \rbrace $ and $\noise \in \mathbb{R}^\numsam $ is additive noise. As a result, we are interested in the solution of 
\begin{equation}\label{eq: ARM}
	\begin{aligned}
	& \underset{\X \in \mathbb{R}^{m\times n}}{\text{minimize}}
	& & f(\X) \\
	& \text{subject to}
	& & \rank(\X) \leq r,
	\end{aligned} 
	\end{equation} 
where $ f(\X) :=  \|\obs - \linmap \X\|_2^2 $ is  the data error. While the optimization problem in \eqref{eq: ARM} is non-convex, it is possible to obtain robust recovery with provable guarantees via iterative greedy algorithms (SVP)\cite{SVP,kyrillidis2012matrix} or convex relaxations \cite{recht2010guaranteed,candes2009exact} from measurements as few as $\numsam=\bigO{\left(r(m+n-r)\right)}$.

Currently, there is a great interest in designing algorithms to handle large scale versions of \eqref{eq: ARM} and its variants. As a concrete example, consider quantum tomography (QT), where we need to recover low-rank density matrices from dimensionality reducing Pauli measurements \cite{flammia2012quantum}. In this problem, the size of these density matrices grows exponentially with the number of quantum bits.
Other collaborative filtering problems, such as the Netflix challenge, also require huge dimensional optimization. Without careful implementations or non-conventional algorithmic designs, existing algorithms quickly run into time and memory bottlenecks.

These computational difficulties typically revolve around two critical issues.
First, virtually all  recovery algorithms require calculating the gradient
$\nabla f(\X)= 2\linmap^*(\linmap(\X) - \obs)$ at an intermediate iterate $\X$,
where $\linmap^*$ is the  adjoint of $\linmap$. When the range of  $\linmap^*$
contains dense matrices, this forces algorithms to use memory proportional to
$\bigO(mn)$. Second, after the iterate is updated with the gradient, projecting
onto the low-rank space requires a partial singular value decomposition (SVD).
This is usually problematic for the initial iterations of convex algorithms,
where they may have to perform full SVD's. In contrast, greedy  algorithms
\cite{kyrillidis2012matrix} fend off the complexity of full SVD's, since they
need fixed rank projections, which can be approximated via Lanczos or
randomized SVD's  \cite{structureRandomness}. 

Algorithms that avoid these two issues do exist, such as
\cite{LMaFit,recht2011parallel,max-norm-splitting,FrankWolfeOMP}, and are
typically based on the Burer-Monteiro splitting \cite{BurerMonteiro2003}. The
main idea in Burer-Monteiro splitting is to remove the non-convex rank
constraint by directly embedding it into the objective: as opposed
to optimizing $\X$, splitting algorithms directly work with its fixed factors
${\bf UV}^T=\X$ in an alternating fashion, where ${\bf U} \in \R^{m\times
\rEst}$ and ${\bf V} \in \R^{n\times \rEst}$ for some $\rEst \ge r$.
Unfortunately, rigorous guarantees are difficult.\footnote{If  $\rEst \gtrsim
\sqrt{\numsam}$, then \cite{BurerMonteiro2003} shows their method obtains a
global solution, but this is impractical for large $\numsam$. Moreover, it is
shown that the explicit rank $\rEst$ splitting method solves a non-convex
problem that has the same local minima as \eqref{eq: ARM} (if $\rEst=r$).
However, the non-convex problems are not \emph{equivalent} (e.g.~${\bf U}={\bf
0}$, ${\bf V}={\bf 0}$ is a stationary point for the splitting problem whereas
${\bf X}={\bf 0}$ is generally not a stationary point for \eqref{eq: ARM}).
Furthermore, recovery bounds for non-convex algorithms, as in
\cite{garg2009gradient} and the present paper, are statements about a sequence
of iterates of the algorithm, and say nothing about the local minima.} 
The work~\cite{alternatingMinRIP} has shown approximation guarantees if $\linmap$
satisfies a restricted isometry property with constant $\delta_{2r} \le
\kappa^2/(100r)$ (in the noiseless case), where
$\kappa=\sigma_1(\x^\star)/\sigma_r(\x^\star)$, or $\delta_{2r} \le 1/(3200
r^2)$ for a bound independent of $\kappa$. The authors suggest that these
bounds may be tightened, and that practical performance is better than the
bound suggests.

In this paper, we merge the gradient calculation and the singular value
projection steps into one and show that this not only removes a huge
computational burden, but suffers only a minor convergence speed drawback in
practice.  Our contribution is a natural but non-trivial fusion of the Singular
Value Projection (SVP) algorithm in \cite{SVP} and the approximate projection
ideas in \cite{kyrillidis2012matrix}. The SVP algorithm is an iterative hard-thresholding
algorithm that has been considered in~\cite{SVP,goldfarb2011convergence}.
Inexact steps in SVP have been considered as a
heuristic~\cite{goldfarb2011convergence} but have not been incorporated into an
overall convergence result.\footnote{ Inexact steps are often incorporated into
analysis of algorithms for convex problems. Of particular note,
\cite{FrankWolfeOMP} allows inexact eigenvalue computations in a modified
Frank-Wolfe algorithm that has applications to \eqref{eq: ARM}.}
A non-convex framework for affine rank minimization (including variants of the SVP algorithm) that utilizes inexact projection operations with provable signal approximation and convergence guarantees
is proposed in \cite{kyrillidis2012matrix}.
Neither \cite{SVP, kyrillidis2012matrix} considers splitting
techniques in the proposed schemes.

This work, departing from \cite{SVP, kyrillidis2012matrix}, engineers the
SVP algorithm to operate like splitting algorithms that {\it directly work with
the factors}; this added twist decreases the per iteration requirements in
terms of storage and computational complexity.  Using this new formulation,
each iteration is nearly as fast as in the splitting method, hence removing a
drawback to SVP in relation to splitting methods.  Furthermore, we prove that,
under some conditions, it is still possible to obtain perfect recovery even if
the projections are inexact.  In particular, our assumption is that the linear
map $\linmap$ satisfies the rank restricted isometry property, and in
section~\ref{sec:quantum} we give an application that satisfies this
assumption, allowing perfect recovery (in the noiseless case) or stable
recovery (in the presence of noise) from measurements $\numsam \ll mn$.
 This approach has been used for convex~\cite{recht2010guaranteed}
and non-convex \cite{SVP,kyrillidis2012matrix} algorithms to obtain
approximation guarantees.

\section{Preliminary material}\label{sec:prelims}
Notation: we write $\proj_\Omega$ to be an orthogonal projection onto the closed set $\Omega$ when it exists. For shorthand we write $\proj_r$ to mean  
$\proj_{\{\x: \rank(\x)\le r\}}$ (which does exist by the Eckart-Young theorem). Computer routine names are typeset with a \texttt{typewriter font}.

\subsection{R-RIP}
The Rank Restricted Isometry Property (R-RIP) is a common tool used in matrix recovery~\cite{recht2010guaranteed,SVP,kyrillidis2012matrix}:
\begin{definition}[R-RIP for linear operators on matrices \cite{recht2010guaranteed}]{\label{def:RIP}} A linear operator $ \linmap: \RR^{m\times n} $ $\rightarrow \RR^{\numsam} $ satisfies the R-RIP with constant $ \delta_{r}(\linmap) \in (0,1)$ if, $\forall \X \in \RR^{m\times n}$ with $\rank(\X)\leq r$, 
\begin{equation}\label{eq:RIP}
  (1-\delta_{r}(\linmap))\vectornormbig{\X}_F^2 \leq \vectornormbig{\linmap \X}_2^2 \leq (1+\delta_{r}(\linmap))\vectornormbig{\X}_F^2, 
\end{equation} 
We write $\delta_{r}$ to mean $\delta_{r}(\linmap)$.
\end{definition}

\subsection{Additional convex constraints}
Consider the variant 
\begin{equation}\label{eq:ARM2}
	\begin{aligned}
	& \underset{\X \in \mathbb{R}^{m\times n}}{\text{minimize}}
	& & f(\X) \\
	& \text{subject to}
	& & \rank(\X) \leq r,\; \x\in \CC,
	\end{aligned} 
\end{equation} 
for a convex set $\CC$. Our main interests are $\CC_+=\{ \x: \x \succeq 0 \}$
and the matrix simplex $\CC_\Delta=\{ \x: \x \succeq 0,\; \tr(\x)=1 \}$. In both cases the constraints 
are unitarily invariant and the projection onto these sets can be done by taking the eigenvalue decomposition and projecting the eigenvalues. 
Furthermore, for these specific $\CC$, $\proj_{\{\x: \rank(\x)\le r\} \cap \CC } = \proj_\CC \circ \proj_r$ (this is not obvious; see ~\cite{kyrillidis2012simplexICML}).\footnote{This formula is literally true for $\CC_+$ and $\{ \x: \x \succeq 0,\; \tr(\x)\le 1 \}$. For $\CC=\{ \x: \x \succeq 0,\; \tr(\x)= 1 \}$ constraints, $\proj_\CC$ can increase the rank, so formally we must work on a restricted subspace and then embed back in the larger space, but this poses no theoretical issues.}

In general, any convex set $\CC$ satisfying the above property is compatible with our algorithm, as long as $\x^\star \in \CC$. 
We overload notation to use $\proj_\CC$ to denote both the projection of $\x$ onto the set as well as the projection of its eigenvalues onto the analogous set.

\subsection{Approximate singular value computations} 
The standard method to compute a partial SVD is the Lanczos method.
By itself it is not numerically stable and requires re-orthogonalization and implicit restarts. Excellent implementations are available, but it is a sequential algorithm that calls matrix-vector products. This makes it more difficult to parallelize, which is an issue on modern multi-processor computers. The matrix-vector multiplies are also slower than grouping into matrix-matrix multiplies since it is harder to predict memory usage and this will lead to cache misses; it also precludes the use of theoretically faster algorithms such as Strassen's. Theoretically, there are no known relative error bounds in norm (\`a la Theorem~\ref{thm:Joel}).

\begin{algorithm}[t]
{\em 
Finds $Q$ such that $X \approx \proj_Q X$ where $\proj_Q = QQ^\adj $.}
\begin{algorithmic}[1]
    \REQUIRE Function $\hh : \widetilde{Z} \mapsto X\widetilde{Z}$
    \REQUIRE Function $\hhAdjoint : \widetilde{Q} \mapsto X^\adj\widetilde{Q}$
    \REQUIRE $r \in \mathbb{N}$ \COMMENT{Rank of output}
    \REQUIRE $q \in \mathbb{N}$ \COMMENT{Number of power iterations to perform}
    \STATE $\ell = r + \rho$ \COMMENT{Typical value of $\rho$ is 5}
    \STATE $\Omega$ a $n \times \ell$ standard Gaussian matrix
    \STATE $W\leftarrow \hh(\Omega)$
    \STATE $Q \la \QR(W)$ \hfill \COMMENT{The QR algorithm to orthogonalize $W$}
    \FOR{ $j=1,2,\ldots,q$ }
      \STATE $Z \la  \QR( \hhAdjoint (Q) )$
      \STATE $Q \la  \QR( \hh(Z) )$
    \ENDFOR
    \STATE $Z \la   \hhAdjoint (Q) $
    \STATE $(U,\Sigma,V) \la \SVDlowrank(Q,I_{\ell},Z)$ \COMMENT{$\widetilde{\x}_{i+1}=U\Sigma V^\adj$ in the appendix}
    \STATE Let $\Sigma_r$ be the best rank $r$ approximation of $\Sigma$
    \RETURN $(U,\Sigma_r,V) $  \COMMENT{$\x_{i+1}=U\Sigma_r V^\adj$ in the appendix}
\end{algorithmic}
\caption{$\SVDfcn$} 
\label{algo:rankProjection}
\end{algorithm}

\begin{algorithm}[t]
    \newcommand{\UU}{\widetilde{U}}
    \newcommand{\VV}{\widetilde{V}}
    \newcommand{\DD}{\widetilde{D}}
    {\em Computes the SVD $U\Sigma V^\adj$ of the matrix $X$ implicitly given by $X=\UU\DD{\VV}^\adj$}
\begin{algorithmic}[1]
    \STATE $(U,R_U) \la \QR(\UU)$
    \STATE $(V,R_V) \la \QR(\VV)$
    \STATE $(u,\Sigma,v) \la \texttt{DenseSVD}(R_U\DD R^\adj_V)$
    \RETURN $(U,\Sigma,V) \la (Uu,\Sigma,Vv)$
\end{algorithmic}
\caption{$\SVDlowrank(\UU,\DD,\VV)$}
\label{algo:svdLowRank}
\end{algorithm}

As an alternative, we turn to randomized linear algebra.
On this front, we restrict ourselves to algorithms that require only multiplications, as opposed to sub-sampling entries/rows/columns, as sub-sampling is not efficient for the application we present. The randomized approach  presented in Algorithm~\ref{algo:rankProjection} has been rediscovered many times, but has seen a recent resurgence of interest due to theoretical analysis~\cite{structureRandomness}:

\begin{theorem}[Average Frobenius error] \label{thm:Joel}
Suppose $\X\in \R^{m\times n}$, and choose a target rank $r$ and oversampling parameter $\rho \ge 2$ where $\ell:= r+\rho \le \min\{m,n\}$. Calculate $Q$ and $\proj_Q$ via \SVDfcn\ using $q=0$
and set $\widetilde{\X} = \proj_Q \X$ (which is rank $\ell$).
Then 
$$ \E \|\X-\widetilde{\X}\|_F^2 \le \left( 1 + \epsilon\right)\|\X-\X_r\|_F^2 $$
where $\X_r$ is the best rank $r$ approximation 
in the Frobenius norm 
of $\X$ and $\epsilon = \frac{r}{\rho-1}$.
\end{theorem}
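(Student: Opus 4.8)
The plan is to follow the two-stage analysis of the randomized range finder in \cite{structureRandomness}: first prove a purely \emph{deterministic} error bound valid for any test matrix $\Omega$ whose relevant sub-block has full row rank, then average that bound over the Gaussian $\Omega$. The starting observation is that with $q=0$ the matrix $Q$ returned by \SVDfcn\ has orthonormal columns spanning $\mathrm{range}(\X\Omega)$, so that $\proj_Q=QQ^\adj$ is precisely the orthogonal projector onto that range and $\widetilde\X=\proj_Q\X$.

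For the deterministic step I would fix a full SVD $\X=U\Sigma V^\adj$ (singular values decreasing) and partition $\Sigma=\diag(\Sigma_1,\Sigma_2)$ and $V=[\,V_1\ V_2\,]$, with $\Sigma_1$ the $r$ largest singular values and $V_1\in\R^{n\times r}$. Setting $\Omega_1=V_1^\adj\Omega$ and $\Omega_2=V_2^\adj\Omega$, unitary invariance of $\|\cdot\|_F$ lets me work in coordinates where $\X=\Sigma$ and the sample is $Y=\Sigma\,\widetilde\Omega$, where $\widetilde\Omega=V^\adj\Omega$ has top $r$ rows $\Omega_1$ and remaining rows $\Omega_2$. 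Assuming $\Omega_1$ has full row rank, so $\Omega_1\Omega_1^\dagger=I_r$, I would exhibit $Z=Y\,\Omega_1^\dagger\,[\,I_r\ 0\,]$, whose columns lie in $\mathrm{range}(Y)$, and verify by a block computation that the residual has the form $\left(\begin{smallmatrix}0&0\\-\Sigma_2\Omega_2\Omega_1^\dagger&\Sigma_2\end{smallmatrix}\right)$. Since $(I-\proj_Q)Z=0$, we have $\X-\proj_Q\X=(I-\proj_Q)(\X-Z)$, and as $I-\proj_Q$ is a contraction this gives $\|\X-\proj_Q\X\|_F^2\le\|\X-Z\|_F^2=\|\Sigma_2\|_F^2+\|\Sigma_2\Omega_2\Omega_1^\dagger\|_F^2$. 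Producing this $Z$ is the one genuinely clever deterministic move.

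I would then average. Because the standard Gaussian law is rotationally invariant, $\widetilde\Omega=V^\adj\Omega$ is again standard Gaussian, so its blocks $\Omega_1\in\R^{r\times\ell}$ and $\Omega_2\in\R^{(n-r)\times\ell}$ are \emph{independent} Gaussian matrices; as $\ell=r+\rho>r$, $\Omega_1$ has full row rank almost surely and the deterministic bound applies. Conditioning on $\Omega_1$ and invoking the identity $\E\|SGT\|_F^2=\|S\|_F^2\|T\|_F^2$ for a Gaussian $G$ independent of fixed $S,T$ (here $S=\Sigma_2$, $G=\Omega_2$, $T=\Omega_1^\dagger$) collapses the random term to $\|\Sigma_2\|_F^2\,\E\|\Omega_1^\dagger\|_F^2$.

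The hard part will be the final ingredient $\E\|\Omega_1^\dagger\|_F^2$, which is exactly what produces the constant $\epsilon=r/(\rho-1)$ and explains the hypothesis $\rho\ge2$. For the full-row-rank Gaussian $\Omega_1$ one has $\|\Omega_1^\dagger\|_F^2=\tr\!\big((\Omega_1\Omega_1^\adj)^{-1}\big)$ with $\Omega_1\Omega_1^\adj$ an $r\times r$ Wishart matrix on $\ell$ degrees of freedom, whose expected inverse-trace equals $r/(\ell-r-1)=r/(\rho-1)$ and is finite precisely when $\rho\ge2$. Assembling the three steps and using $\|\Sigma_2\|_F^2=\|\X-\X_r\|_F^2$ (Eckart--Young) then yields $\E\|\X-\widetilde\X\|_F^2\le(1+\tfrac{r}{\rho-1})\|\X-\X_r\|_F^2=(1+\epsilon)\|\X-\X_r\|_F^2$, as claimed.
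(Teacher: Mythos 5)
Your proposal is correct and is essentially the same argument the paper relies on: the paper gives no self-contained proof but instead invokes the proof of Theorem~10.5 in \cite{structureRandomness}, which is precisely the two-stage argument you reconstruct (the deterministic bound $\|\X-\proj_Q\X\|_F^2\le\|\Sigma_2\|_F^2+\|\Sigma_2\Omega_2\Omega_1^\dagger\|_F^2$, followed by Gaussian averaging via $\E\|SGT\|_F^2=\|S\|_F^2\|T\|_F^2$ and the Wishart inverse-trace identity $\E\|\Omega_1^\dagger\|_F^2=r/(\rho-1)$). Note your derivation correctly targets the squared-Frobenius expectation, which is the intermediate step of that cited proof rather than its final statement --- exactly the distinction the paper flags when it remarks that $\E\|\X-\widetilde{\X}\|_F$ is not the same as $\sqrt{\E\|\X-\widetilde{\X}\|_F^2}$.
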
 The theorem follows from the proof of Thm.~10.5 in \cite{structureRandomness} (note that Thm.~10.5 is stated in terms of $ \E \|\X-\widetilde{\X}\|_F $ which is not the same as  $ \sqrt{\E \|\X-\widetilde{\X}\|_F^2 }$ ). The expectation is with respect to the Gaussian r.v. in \SVDfcn. For the sake of our analysis, we cannot immediately truncate $\widetilde{\X}$ to rank $r$ since then the error bound in \cite{structureRandomness} is not tight enough. Thus, since $\widetilde{X}$ is rank $\ell$, in practice we even observe that 
$ \|\X-\widetilde{\X}\|_F^2 < \|\X-\X_r\|_F^2$, especially for small $r$, as shown in Figure~\ref{fig:epsilon}. The figure also shows that using $q > 0$ power iterations is extremely helpful, though this is not taken into account in our analysis since there are no useful theoretical bounds (in the Frobenius norm). Note that variants for eigenvalues also exist; we refer to the equivalent of \SVDfcn\  as \EIGfcn, which has the property that $U=V$ and $\Sigma$ need not be positive (cf., \cite{structureRandomness,GittensMahoney2013})

\section{Algorithm} \label{sec:algorithm}

\subsection{Projected gradient descent}
Our approach is based on the projected gradient descent algorithm: 
\begin{equation}\label{eq: projected gradient}
  \Xkk = \proj_r^\epsilon(\Xkk - \mu_\iter  \nabla f(\Xk)),
\end{equation}
where $\Xk$ is the $\iter$-th iterate, $\nabla f(\cdot)$ is the gradient of the loss function, $\mu_\iter$ is a step-size, and $\proj_r^\epsilon(\cdot)$ is the approximate projector onto rank $r$ matrices given by \SVDfcn.  If we include a convex constraint $\CC$, then the iteration is 
\begin{equation}\label{eq: projected gradientC}
  \Xkk = \proj_\CC(\proj_r^\epsilon(\Xkk - \mu_\iter  \nabla f(\Xk))).
\end{equation}

In practice, Nesterov acceleration improves performance:
\begin{align}\label{eq:Nesterov}
  \Ykk      &= (1+\beta_\iter) \Xk - \beta_\iter \Xold \\
  \Xkk      &= \mathcal{P}(\Yk - \mu_\iter \nabla f(\Yk)),
\end{align}
where $\beta_\iter$ is chosen $\beta_\iter = (\alpha_{\iter-1} -1)/\alpha_\iter$ and $\alpha_0=1$, 
$ 2 \alpha_{\iter+1} = 1 + \sqrt{4 \alpha_\iter^2 + 1 }$~\cite{Nesterov83} (see \cite{kyrillidis2012matrix}).
 Theorem~\ref{thm: invariant} holds for a stepsize $\mu_i$ based on the RIP constant, which is unknown. In practice, the algorithm consistently converges as long as $\mu_i \lesssim \frac{2}{\|\linmap\|^2}$.

\begin{algorithm}[t]
    \newcommand{\dk}{d_\iter}
    \newcommand{\dkk}{d_{\iter+1}}
    \newcommand{\dold}{d_{\iter-1}}
    \newcommand{\uk}{u_\iter}
    \newcommand{\ukk}{u_{\iter+1}}
    \newcommand{\uold}{u_{\iter-1}}
    \newcommand{\vk}{v_\iter}
    \newcommand{\vkk}{v_{\iter+1}}
    \newcommand{\vold}{v_{\iter-1}}
    
\begin{algorithmic}[1]
    \REQUIRE step-size $\mu > 0$, measurements $\obs$, initial points $u_0\in \mathcal{K}^{m \times r},\; v_0\in\mathcal{K}^{n\times r},\; d_0\in\mathcal{K}^{r}$
    \REQUIRE (optional) unitarily invariant convex set $\CC$
 \REQUIRE Function $\Aforward:(u,d,v) \mapsto \linmap(u\diag(d)v^\adj)$
 \REQUIRE Function $\Aadjoint:(\resid,w) \mapsto \linmap^*(\resid)w$
 \REQUIRE Function $\Aadjointadjoint:(\resid,w) \mapsto (\linmap^*(\resid))^\adj w$
 \STATE $v_{-1}\leftarrow 0$, $u_{-1}\la 0$, $d_{-1}\la 0$
 \FOR{ $\iter=0,1,\ldots$ }
  \STATE Compute $\beta_\iter$ \COMMENT{See text}
  \STATE $u_y \la [\uk,\uold]$, $v_y \la [\vk,\vold]$
  \STATE $d_y \la [(1+\beta_\iter)\dk,-\beta_\iter \dold]$
  \STATE $\resid \la A(u_y,d_y,v_y) - \obs$ \hfill \COMMENT{Compute the residual}
  \STATE Define the functions \\ $\hh: w \mapsto u_y\diag(d_y)v_y^\adj w - \mu \Aadjoint(\resid,w)$ \\
$\hhAdjoint: w \mapsto v_y\diag(d_y)u_y^\adj w - \mu \Aadjointadjoint(\resid,w)$
  \STATE $(\ukk,\dkk,\vkk) \la \SVDfcn(\hh,\hhAdjoint,r)$ or $(\ukk,\dkk,\ukk) \la \EIGfcn(\hh,\hhAdjoint,r)$
  \STATE $\dkk \la \proj_\CC( \dkk) $ \COMMENT{Optional} \label{algo:optional}
 \ENDFOR
 \RETURN $X\leftarrow \uk\dk\vk^\adj$ \COMMENT{If desired}
\end{algorithmic}
\caption{Efficient implementation of SVP, $\mathcal{K}=\{\R,\C\}$}
\label{algo:1}
\end{algorithm}

Algorithm~\ref{algo:1} shows implementation details that are important for keeping low-memory requirements. The implementation of maps like $\Aforward$ and $\Aadjoint$ depends on the structure of $\linmap$; see section \ref{sec:quantum} for explicit examples.

\section{Convergence} \label{sec:convergence}

We assume the observations are generated by $ \obs = \linmap \X^\star+ \noise $ where $\noise$ is a noise term, not to be confused with the approximation error $\epsilon$. In the following theorem, we will assume that $\|\linmap\|^2 \le mn/\numsam $, which is true for the quantum tomography example~\cite{liu2011universal}; if $\linmap$ is a normalized Gaussian, then this assumption holds in expectation.
\begin{theorem}(Iteration invariant)\label{thm: invariant}  
Pick an accuracy $\epsilon = \frac{r}{\rho-1} $, where $\rho$ is defined as in Theorem 1. 
Define $\ell=r+\rho$ and let $c$ be an integer such that $\ell = (c-1)r$.
Let $\mu_i = \frac{1}{2(1+\delta_{cr})}$ in \eqref{eq: projected gradient} and assume
$\|\linmap\|^2 \le mn/\numsam$
and $f(\X_i)>C^2\|\noise\|^2$, where $C \ge 4$ is a constant. Then the descent scheme \eqref{eq: projected gradient} or 
\eqref{eq: projected gradientC}
has the following iteration invariant
  \begin{equation}\label{eq:iter}
    \E f(\X_{i+1}) \le \theta  f(\X_{i}) + \tau \|\noise\|^2, 
  \end{equation}
  in expectation, where 
\begin{equation*}
    \theta \le 12 \cdot \frac{1+\delta_{2r}}{1 - \delta_{cr}} \cdot 
\left( 
    \frac{\epsilon}{1+\delta_{cr}}\cdot\frac{m n }{\numsam} + (1+\epsilon)\frac{3\delta_{cr}}{1-\delta_{2r}} 
\right),
\end{equation*}
and 
\begin{equation*}
\tau \le  
  \frac{1+\delta_{2r}}{1 - \delta_{cr}}  \cdot  
 \left( 
 12\cdot(1+\epsilon)  \left(1 + \frac{2\delta_{cr}}{1-\delta_{2r}}\right) 
 + 8
 \right). 
\end{equation*}
  The expectation is taken with respect to Gaussian random designs in \SVDfcn.
  If $\theta \le \theta_\infty < 1$ for all iterations, then $\lim_{i \rightarrow \infty} \E f(\X_{i}) \le \max\{C^2,\frac{\tau}{1-\theta_\infty} \}\|\noise\|^2$.
\end{theorem}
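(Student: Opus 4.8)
The plan is to analyze a single step conditionally on the current iterate $\Xk$, bound the error of the \emph{untruncated} rank-$\ell$ matrix produced by \SVDfcn, and only afterwards pay the multiplicative price of truncating it back to rank $r$. Write $\X^+ = \Xk - \mu_i\nabla f(\Xk)$ for the gradient step, let $\widetilde{\X}_{i+1} = \proj_Q\X^+$ be the rank-$\ell$ output of \SVDfcn, and let $\Xkk$ be its best rank-$r$ truncation. The basic tool throughout is the exact second-order expansion of the quadratic loss, $f(\Y) = f(\Xk) + \langle\nabla f(\Xk),\,\Y - \Xk\rangle + \|\linmap(\Y - \Xk)\|_2^2$, which turns every inner product against the gradient into a difference of function values plus a $\|\linmap(\cdot)\|_2^2$ term; combined with the R-RIP, this is what converts geometric quantities into values of $f$.

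First I would establish a one-step bound on $\E f(\widetilde{\X}_{i+1})$. Since $\bestX$ is feasible (rank $r$) and $\widetilde{\X}_{i+1} = \proj_Q\X^+$, Theorem~\ref{thm:Joel} gives the near-optimality $\E\|\X^+ - \widetilde{\X}_{i+1}\|_F^2 \le (1+\epsilon)\|\X^+ - \bestX\|_F^2$. Substituting $\X^+ = \Xk - \mu_i\nabla f(\Xk)$, expanding both squared norms, taking the conditional expectation, and applying the quadratic identity to $\langle\nabla f(\Xk),\widetilde{\X}_{i+1}-\Xk\rangle$ and $\langle\nabla f(\Xk),\bestX-\Xk\rangle$ (using $f(\bestX)=\|\noise\|_2^2$) yields an inequality in $\E f(\widetilde{\X}_{i+1})$, $f(\Xk)$, $\E\|\widetilde{\X}_{i+1}-\Xk\|_F^2$, $\|\bestX-\Xk\|_F^2$, $\|\noise\|_2^2$, and $\|\nabla f(\Xk)\|_F^2$. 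Two cancellations drive the result. The differences $\widetilde{\X}_{i+1}-\Xk$ and $\bestX-\Xk$ have rank at most $\ell+r=cr$ and $2r$, so the R-RIP sandwiches their $\|\linmap(\cdot)\|_2^2$ terms (an upper bound via $\delta_{cr}$, a lower bound via $\delta_{2r}$), and the choice $\mu_i=\frac{1}{2(1+\delta_{cr})}$, which makes $2\mu_i(1+\delta_{cr})=1$, exactly annihilates the $\E\|\widetilde{\X}_{i+1}-\Xk\|_F^2$ contribution. Crucially, the $\mu_i^2\|\nabla f(\Xk)\|_F^2$ terms appear on both sides with coefficients $1$ and $1+\epsilon$, so the unit part cancels and only the excess $\epsilon\mu_i^2\|\nabla f(\Xk)\|_F^2$ survives. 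Because $\nabla f(\Xk)=2\linmap^*(\linmap\Xk-\obs)$ is generally full rank, this leftover cannot use the R-RIP; I instead bound it by $\|\nabla f(\Xk)\|_F^2 \le 4\|\linmap\|^2 f(\Xk) \le 4\tfrac{mn}{\numsam}f(\Xk)$, which is exactly where the assumption $\|\linmap\|^2\le mn/\numsam$ enters. This is the sole origin of the $\frac{\epsilon}{1+\delta_{cr}}\frac{mn}{\numsam}$ term and the reason it is proportional to $\epsilon$. Dividing by $2\mu_i$ gives $\E f(\widetilde{\X}_{i+1}) \le \frac{\epsilon}{1+\delta_{cr}}\frac{mn}{\numsam}f(\Xk) + (1+\epsilon)(\delta_{cr}+\delta_{2r})\|\bestX-\Xk\|_F^2 + (1+\epsilon)\|\noise\|_2^2$ (discarding a favorable $-\epsilon f(\Xk)$).

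Next I would pass from $\widetilde{\X}_{i+1}$ (rank $\ell$) to the stored iterate $\Xkk$ (rank $r$) and re-express everything through $f$. An R-RIP upper bound gives $f(\Xkk)\le 2(1+\delta_{2r})\|\Xkk-\bestX\|_F^2 + 2\|\noise\|_2^2$; since $\Xkk$ is the best rank-$r$ approximation of $\widetilde{\X}_{i+1}$ and $\bestX$ is rank $r$, we have $\|\Xkk-\bestX\|_F \le 2\|\widetilde{\X}_{i+1}-\bestX\|_F$; and an R-RIP lower bound on the rank-$cr$ difference together with $\|\linmap(\widetilde{\X}_{i+1}-\bestX)\|_2 \le \sqrt{f(\widetilde{\X}_{i+1})}+\|\noise\|_2$ gives $\|\widetilde{\X}_{i+1}-\bestX\|_F^2 \le \frac{1}{1-\delta_{cr}}\bigl(\sqrt{f(\widetilde{\X}_{i+1})}+\|\noise\|_2\bigr)^2$. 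Chaining these multiplicatively produces $\E f(\Xkk) \le \frac{12(1+\delta_{2r})}{1-\delta_{cr}}\,\E f(\widetilde{\X}_{i+1}) + (\text{noise})$, the constant $12$ collecting the R-RIP upper bound, the truncation doubling, and the slack in $(\sqrt{a}+\sqrt{b})^2\le 2a+2b$. The contraction term still contains $\|\bestX-\Xk\|_F^2$, which I convert by the same R-RIP-lower-bound device, $\|\bestX-\Xk\|_F^2 \le \frac{1}{1-\delta_{2r}}\bigl(\sqrt{f(\Xk)}+\|\noise\|_2\bigr)^2$; here the standing assumption $f(\Xk)>C^2\|\noise\|_2^2$ with $C\ge 4$ bounds $\|\noise\|_2 < \sqrt{f(\Xk)}/C$, so the signal-noise cross term is absorbed into an $f(\Xk)$-proportional quantity (turning $\delta_{cr}+\delta_{2r}\le 2\delta_{cr}$ and $(1+1/C)^2$ into the displayed $3\delta_{cr}$). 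Collecting the $f(\Xk)$-coefficients gives $\theta$ and the remaining $\|\noise\|_2^2$-coefficients give $\tau$, matching the stated bounds up to the collection of absolute constants.

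Finally, the limiting claim follows by iterating the invariant: taking total expectations of \eqref{eq:iter} and using $\theta\le\theta_\infty<1$ gives $\E f(\X_{i+1}) \le \theta_\infty\,\E f(\X_i) + \tau\|\noise\|_2^2$ on the regime where $f(\X_i)>C^2\|\noise\|_2^2$; unrolling this geometric recursion drives $\E f(\X_i)$ toward $\frac{\tau}{1-\theta_\infty}\|\noise\|_2^2$, while once an iterate falls below the floor $C^2\|\noise\|_2^2$ the bound $\max\{C^2,\frac{\tau}{1-\theta_\infty}\}\|\noise\|_2^2$ already holds, so a short stopping-time/monotonicity argument closes the limit. \textbf{The main obstacle} I anticipate is the bookkeeping that keeps the $mn/\numsam$ penalty tied to $\epsilon$: one must apply the near-optimality to the untruncated rank-$\ell$ iterate $\widetilde{\X}_{i+1}$ and charge only the excess $\epsilon\mu_i^2\|\nabla f(\Xk)\|_F^2$ to the weak operator-norm bound, letting the step-size cancellation and the R-RIP absorb the non-random part. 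Charging the full $(1+\epsilon)$ factor, or applying the near-optimality directly to the rank-$r$ truncation, would contaminate the contraction with an $O(mn/\numsam)$ term that does not vanish as $\epsilon\to 0$ and would destroy the result. A secondary delicacy is the rank accounting that assigns $\delta_{cr}$ versus $\delta_{2r}$ to each term, for which the integer $c$ with $\ell=(c-1)r$ is introduced precisely so that $\ell+r=cr$.
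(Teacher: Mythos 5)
Your proposal is correct and follows essentially the same route as the paper's proof: the paper phrases the first stage as a descent-lemma/complete-the-square argument with restricted strong convexity, but that is algebraically identical to your direct expansion of the near-optimality inequality, and both arrive at the same intermediate bound $\E f(\widetilde{\X}_{i+1}) \le \frac{\epsilon}{1+\delta_{cr}}\cdot\frac{mn}{\numsam}f(\X_i) + (1+\epsilon)(\delta_{cr}+\delta_{2r})\|\bestX-\X_i\|_F^2 + (1+\epsilon)\|\noise\|_2^2$, followed by the identical truncation stage (triangle inequality plus best-rank-$r$ optimality giving the factor of $2$, then R-RIP conversions between Frobenius distances and values of $f$). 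The only small differences are cosmetic: the paper also covers the convex-constraint variant \eqref{eq: projected gradientC} via non-expansiveness of $\proj_\CC$ (a one-line footnote you omit), and in the truncation stage the stated constant $12$ requires using the assumption $f(\X_i)>C^2\|\noise\|_2^2$ with $C\ge 4$ for the noise cross terms (as the paper does), since the cruder bound $(\sqrt{a}+b)^2\le 2a+2b^2$ you also mention would give $16$ instead.
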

Each call to \SVDfcn\  draws a new Gaussian r.v., so the expected value does not depend on previous iterations.
By Corollary 3.4 in~\cite{cosamp}, $\delta_{cr} \le c \cdot \delta_{2r}$, which allows us to put $\theta$ and $\tau$ in terms of $\delta_{2r}$ if desired, at a slight expense in sharpness.

The expected value of the function converges linearly at rate $\theta$ to within a constant of the noise level, and in particular, it converges to zero when there is no noise since $C$ and $\tau$ are finite. 
Note that convergence of the iterates follows from convergence of the function $f$:
\begin{corollary}  
    If $f(\X_i) \le \gamma$, then $\|\X_i - \X^\star\|_F^2 \le \frac{(\sqrt{\gamma}+\|\noise\|_2 )^2}{1-\delta_{2r}}$.
\end{corollary}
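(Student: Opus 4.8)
The plan is to unravel the definition of $f$ and then apply the R-RIP lower bound of Definition~\ref{def:RIP} to the error matrix $\X_i - \X^\star$, whose rank is controlled because both the iterate and the target are low rank. This is a purely deterministic argument, independent of the randomization and expectations in Theorem~\ref{thm: invariant}.

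First I would substitute the data model $\obs = \linmap \X^\star + \noise$ into $f(\X_i) = \|\obs - \linmap \X_i\|_2^2$ to obtain $f(\X_i) = \|\linmap(\X^\star - \X_i) + \noise\|_2^2$, so that the hypothesis $f(\X_i)\le\gamma$ becomes $\|\linmap(\X^\star - \X_i) + \noise\|_2 \le \sqrt{\gamma}$. A single triangle inequality then isolates the signal part from the noise part, giving $\|\linmap(\X^\star - \X_i)\|_2 \le \sqrt{\gamma} + \|\noise\|_2$.

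The key step is to observe that $\X_i$ and $\X^\star$ each have rank at most $r$, hence $\rank(\X^\star - \X_i) \le 2r$, so Definition~\ref{def:RIP} applies at order $2r$ and yields $(1-\delta_{2r})\|\X^\star - \X_i\|_F^2 \le \|\linmap(\X^\star - \X_i)\|_2^2$. Squaring the triangle-inequality bound and chaining it with this R-RIP inequality gives $(1-\delta_{2r})\|\X_i - \X^\star\|_F^2 \le (\sqrt{\gamma}+\|\noise\|_2)^2$, and dividing through by $1-\delta_{2r}$ produces the stated bound.

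There is no genuine obstacle here; the argument is a direct consequence of the lower R-RIP inequality and does not touch the approximation error $\epsilon$. The only point requiring care is invoking the isometry property at rank $2r$ rather than $r$, which is exactly why the bound carries $\delta_{2r}$. Keeping $\gamma$ generic is deliberate, since the corollary is meant to be combined with the limiting function value $\max\{C^2,\tau/(1-\theta_\infty)\}\|\noise\|^2$ furnished by the theorem to translate function-value convergence into iterate convergence.
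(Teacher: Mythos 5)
Your proof is correct and follows essentially the same route as the paper's: rewrite $\linmap(\X_i - \X^\star)$ in terms of the residual $\obs - \linmap(\X_i)$ and the noise, apply the triangle inequality, and invoke the lower R-RIP bound at rank $2r$. If anything, your version is slightly more careful than the paper's, which leaves the observation $\rank(\X_i - \X^\star)\le 2r$ implicit and contains a sign typo (its display uses $\sqrt{1+\delta_{2r}(\linmap)}$ where the lower-isometry constant $\sqrt{1-\delta_{2r}(\linmap)}$ is what the stated conclusion requires).
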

\begin{proof}
By the R-RIP and the triangle inequality,
\begin{align*}
    \sqrt{1+\delta_{2r}(\linmap)}\|\X_i - \X^\star\|_F & \le \| \linmap(\X_i) - \linmap(\X^\star) \|_2 \\
    & = \| (\linmap(\X_i)-\obs) - (\linmap(\X^\star)-\obs) \|_2 \\
    & \le \| (\linmap(\X_i)-\obs) \|_2 + \|\noise \|_2 \\
    & \le \sqrt{\gamma} + \|\noise \|_2
\end{align*}
\end{proof}

\begin{corollary}[Exact computation]\label{cor:exact} %
   If $\epsilon=0$ and there is no additional convex constraint $\CC$, then $\theta=\frac{2\delta_{2r}}{  1 - \delta_{2r} }( 1+\frac{2}{C})$ and $\tau = 1+\frac{2\delta_{2r}}{1-\delta_{2r}}$,
    hence $\theta < 1$ if $\delta_{2r} < \frac{1}{3+4/C}$.
\end{corollary}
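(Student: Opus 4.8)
The plan is to prove this directly from the exact-projection analysis rather than by substituting $\epsilon=0$ and $c=2$ into the bounds of Theorem~\ref{thm: invariant}: those bounds carry loose constants (the factor $12$, the $(1+\epsilon)$ terms, and the $\epsilon\cdot mn/\numsam$ term) that originate purely from the randomized approximation, and setting $\epsilon=0$ only kills some of them. A self-contained argument in the exact case gives the sharp constants quoted here. When $\epsilon=0$ the projection is exact, so there is no randomness and the expectation is vacuous; moreover the output rank is exactly $r$, i.e.\ $\ell=r$ and $c=2$, so the only R-RIP constant that appears is $\delta_{2r}$.

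First I would exploit exactness of the projection. Writing $g_i=\X_i-\mu\nabla f(\X_i)$, the iterate $\X_{i+1}=\proj_r(g_i)$ is the best rank-$r$ approximation of $g_i$, and since $\X^\star$ has rank $\le r$,
\[
  \norm{g_i-\X_{i+1}}_F^2 \le \norm{g_i-\X^\star}_F^2 .
\]
Because $f$ is quadratic its second-order expansion is exact, $f(\Y)=f(\X)+\langle \nabla f(\X),\Y-\X\rangle+\norm{\linmap(\Y-\X)}_2^2$. Expanding the projection inequality and substituting this identity for both $\Y=\X_{i+1}$ and $\Y=\X^\star$ eliminates the inner-product terms and, after collecting, yields a bound of the form
\[
  f(\X_{i+1}) \le f(\X^\star) + \Big(\tfrac{1}{2\mu}-(1-\delta_{2r})\Big)\norm{\X_i-\X^\star}_F^2 + \Big((1+\delta_{2r})-\tfrac{1}{2\mu}\Big)\norm{\X_{i+1}-\X_i}_F^2,
\]
where I have already applied the R-RIP at rank $2r$ to the two rank-$\le 2r$ differences $\X_i-\X^\star$ and $\X_{i+1}-\X_i$ (valid since every iterate has rank $\le r$). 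The step-size choice $\mu=\tfrac{1}{2(1+\delta_{2r})}$, i.e.\ $\tfrac{1}{2\mu}=1+\delta_{2r}$, is designed precisely so that the last coefficient vanishes and the first becomes $2\delta_{2r}$; using $f(\X^\star)=\norm{\noise}_2^2$, this leaves $f(\X_{i+1}) \le \norm{\noise}_2^2 + 2\delta_{2r}\norm{\X_i-\X^\star}_F^2$.

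It remains to convert $\norm{\X_i-\X^\star}_F^2$ back into $f(\X_i)$. The R-RIP lower bound gives $\norm{\X_i-\X^\star}_F^2 \le (1-\delta_{2r})^{-1}\norm{\linmap(\X_i-\X^\star)}_2^2$, and since $\obs-\linmap\X_i=\noise-\linmap(\X_i-\X^\star)$ the triangle inequality yields $\norm{\linmap(\X_i-\X^\star)}_2 \le \sqrt{f(\X_i)}+\norm{\noise}_2$ (exactly the estimate used in the corollary following the theorem). Substituting gives
\[
  f(\X_{i+1}) \le \norm{\noise}_2^2 + \frac{2\delta_{2r}}{1-\delta_{2r}}\big(\sqrt{f(\X_i)}+\norm{\noise}_2\big)^2 .
\]

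The main (and really only) subtlety is the cross term in $\big(\sqrt{f(\X_i)}+\norm{\noise}_2\big)^2 = f(\X_i)+2\sqrt{f(\X_i)}\norm{\noise}_2+\norm{\noise}_2^2$, which is not yet of the form $\theta f(\X_i)+\tau\norm{\noise}_2^2$. This is exactly where the standing hypothesis $f(\X_i)>C^2\norm{\noise}_2^2$ enters: it gives $\norm{\noise}_2\le\tfrac1C\sqrt{f(\X_i)}$, so $2\sqrt{f(\X_i)}\norm{\noise}_2\le\tfrac2C f(\X_i)$, and keeping the final $\norm{\noise}_2^2$ in the noise term yields $\big(\sqrt{f(\X_i)}+\norm{\noise}_2\big)^2 \le (1+\tfrac2C)f(\X_i)+\norm{\noise}_2^2$. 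Plugging in reads off $\theta=\frac{2\delta_{2r}}{1-\delta_{2r}}(1+\frac2C)$ and $\tau=1+\frac{2\delta_{2r}}{1-\delta_{2r}}$. Finally $\theta<1$ is a one-line rearrangement: clearing the denominator gives $\delta_{2r}\big(2(1+\tfrac2C)+1\big)<1$, i.e.\ $\delta_{2r}<\frac{1}{3+4/C}$.
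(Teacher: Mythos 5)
Your proposal is correct and is essentially the paper's own argument: the paper justifies this corollary midway through the proof of Theorem~\ref{thm: invariant}, noting that exact SVD computation makes $\widetilde{\X}_{i+1}$ already rank $r$, so the lossy second stage of that proof (the source of the factor $12$) is skipped and one takes $\theta=\theta'$, $\tau=\tau'$, which with $\epsilon=0$ and $\ell=r$ (hence $\delta_{r+\ell}=\delta_{2r}$) are precisely the constants you derive. Your direct expansion of the projection optimality inequality combined with the exact quadratic identity at both $\X_{i+1}$ and $\X^\star$ is algebraically the same as the paper's complete-the-square plus restricted-strong-convexity steps, and your treatment of the cross term via $\|\noise\|_2\le\frac{1}{C}\sqrt{f(\X_i)}$ matches the paper's derivation of \eqref{eq:05}.
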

Corollary~\ref{cor:exact} shows that without the approximate SVD, the R-RIP constants are quite reasonable. 
For example, with exact computation and no noise, any value of $\delta_{2r} < 1/3$ implies that $\lim_{i \rightarrow \infty} \X_i = \X^\star$. With noise, choosing $C=4$ gives $\delta_{2r} = 1/5$ and 
$\theta=3/4$, $\tau=3/2$ and thus $\lim_{i \rightarrow \infty} f(\X_i) \le \max\{ 16, 6 \}\|\noise\|^2$.

Note that the theorem gives pessimistic values for $\epsilon$. We want the bound on $\theta$ to be less than $1$ in order to have a contraction, so we need 
\begin{equation*}
     \underbrace{12 \cdot \frac{1+\delta_{2r}}{1 - \delta_{cr}} \cdot 
    \frac{\epsilon}{1+\delta_{cr}}\cdot\frac{m n }{\numsam}}_{\text{I}}
    + 
    \underbrace{12(1+\epsilon) \cdot \frac{1+\delta_{2r}}{1 - \delta_{cr}} \cdot 
    \frac{3\delta_{cr}}{1-\delta_{2r}}}_{\text{II}}
    < 1
\end{equation*}
For a rough analysis, we will give approximate conditions so that each of the I and II terms is less than $0.5$. 
It is clear that the terms blow up if $\delta_{cr} \rightarrow 1$, so we will assume $\delta_{cr} \ll 1$ (and hence $\delta_{2r}\ll 1$). Then setting $1+\delta_{2r}\approx 1$ in the numerator of I, we require that
\begin{equation}
    \frac{12}{1-\delta_{cr}^2}\cdot\frac{\epsilon\, mn}{p} < \frac{1}{2}
\end{equation}
which means that 
we need $\epsilon \lesssim \frac{p}{24mn}$. For quantum tomography, $m=n$ and $p=\order(rn)$, so we require $\epsilon \lesssim \mathcal{O}(r/n)$. From Theorem~\ref{thm:Joel}, our bound on $\epsilon$ is $r/(\rho-1)$, so we require $\rho \simeq n$, which defeats the purpose of the randomized algorithm (in this case, one would just do a dense SVD).
Numerical examples in the next section will show that $\rho$ can be nearly a small constant, so the theory is not sharp.

For the II term, again approximate $1+\delta_{2r}\approx 1$ and then multiply the denominators and ignore the $\delta_{cr}\delta_{2r}$ term to get
\begin{equation}
    72 \delta_{cr}(1+\epsilon) \lesssim 1-\delta_{2r}-\delta_{cr}. 
\end{equation}
Since certainly $\epsilon \le 0.5$ and $\delta_{2r}+\delta_{cr}\le 0.5$, a sufficient condition is $\delta_{cr} < 1/216$, which is reasonable (cf.~\cite{alternatingMinRIP}).

\section{Numerical experiments} \label{sec:numerics}

\newcommand{\qbit}{q_b}

\subsection{Application: quantum tomography}\label{sec:quantum}
As a concrete example, we apply the algorithm to the quantum tomography problem, which is a particular instance of \eqref{eq: ARM}. For details, we refer to \cite{QuantumTomoPRL,flammia2012quantum}. The salient features are that the variable $\X\in\C^{n \times n}$ is constrained to be Hermitian positive-definite, and that, unlike many low-rank recovery problems, the 
linear operator $\linmap$ satisfies the R-RIP: \cite{liu2011universal} establishes that Pauli measurements (which comprise $\linmap$) have R-RIP with overwhelming probability when $\numsam=\bigO{(rn\log^6n)}$. 
In the ideal case, $\x^\star$ is exactly rank $1$, but it may have larger rank due to some (non-Gaussian) noise processes, in addition to AWGN $\noise$. 
Furthermore, it is known that the true solution $\X^\star$ has trace 1, which is also possible to exploit in our algorithmic framework.

Since $\X$ is Hermitian, 
the $u$ and $v$ terms in the algorithm are identical. Several computations can be simplified and there is a version of Algorithm~\ref{algo:rankProjection} which exploits the positive-definiteness to incorporate a Nystr\"om approximation (and also forces the approximation to be positive-definite); see \cite{structureRandomness,GittensMahoney2013}. Here, we focus on showing how the functions $\Aforward$ and $\Aadjoint$ can be computed (due to the complex symmetry, $\Aadjointadjoint=\Aadjoint$).

In quantum tomography, the linear operator has the form
$(\linmap(\X))_j = \<{\bf E}_j,\X\>$ where ${\bf E}_j={\bf E}_j^\adj$ is the Kronecker product of $2 \times 2$
Pauli matrices. There are four possible Pauli matrices $\sigma_{x,y,z}$ if we define $\sigma_I$ to be the $2\times 2$ identity matrix. For a $\qbit$-qubit system, 
$ {\bf E}_j = \sigma_{j1} \otimes \sigma_{j2} \otimes \ldots \otimes \sigma_{j\qbit}$.
For roughly 12 qubits and fewer, it is simple to calculate $\linmap(\X)$ by 
explicitly forming ${\bf E}_j$ and then creating a sparse matrix ${\bf A}$ with the $j^\text{th}$ row of ${\bf A}$ equal to $\VEC({\bf E}_j)$ so that $\linmap(\X) = {\bf A} \VEC(\X)$. For larger systems,
storing this sparse matrix is impractical since there are $\numsam \ge n$ rows and each row has exactly $n$ non-zero entries, so there are over $n^2$ entries in ${\bf A}$. 

To keep memory low, we exploit the Kronecker-product nature of ${\bf E}_j$
and store it with only $\qbit$ numbers.  
When $\X={\bf xx}^\adj$, we compute $ \<{\bf E}_j,\X\> = \tr({\bf E}_j{\bf xx}^\adj)
= \tr({\bf x}^\adj {\bf E}_j{\bf x})$, and ${\bf E}_j{\bf x}$ can be computed in $\order(\qbit n)$ time. This gives us $\Aforward$. The output of $\Aforward$ is real even when $\X$ is complex.

To compute $\Aadjoint(\resid,{\bf w})$ when the dimensions are small, we just explicitly form the matrix ${\bf M} = \linmap(\resid)$ and then multiply ${\bf Mw}$. To form ${\bf M}$, we use the same sparse matrix ${\bf A}$ as above and reshape the $n^2$ vector ${\bf A}^*\resid$ into a $n \times n$ matrix.
For larger dimensions, when it is impractical to store ${\bf A}$, we implicitly represent
$ {\bf M} = \sum_{j=1}^\numsam \resid_j {\bf E}_j$ and thus ${\bf Mw} = \sum_{j=1}^\numsam \resid_j {\bf E}_j {\bf w}$. In general, the output is complex. However, if it is known \emph{a priori} that $\X$ is real-valued, this can be exploited by taking the real part of ${\bf M}$. This leads to a considerable time savings ($2\times$ to $4\times$), and all experiments shown below make this assumption.

In our numerical implementation, we code both $\Aforward$ and $\Aadjoint$ in C and 
parallelize the code since this is the most computationally expensive calculation. Our parallelization implementation 
uses both \texttt{pthreads} on local cores as well as message passing among different computers.
There are two approaches to parallelization: divide the indices $j=1,\ldots,\numsam$ among different cores, or, when ${\bf x}$ or ${\bf w}$ has several columns, send different columns to the different cores. Both approaches are efficient in terms of message passing since $\linmap$ is parameterized and static. The latter approach only works when ${\bf x}$ or ${\bf w}$ has a significant number of columns, and so it does not apply to Lanczos methods that perform only matrix-vector multiplies.

Recording error metrics can be costly if not done correctly. 
Let  
$\X={\bf xx}^\adj$ and ${\bf Y}={\bf yy}^\adj$ be rank-$r$ factorizations.
For the Frobenius norm error 
$\|\X-{\bf Y}\|_F$ 
which requires $n^2$ operations naively, we expand the term and use the cyclic invariance of trace to get $\|\X-{\bf Y}\|_F^2= \tr({\bf x}^\adj {\bf xx}^\adj {\bf x}) + \tr({\bf y}^\adj {\bf yy}^\adj {\bf y}) - 2\tr({\bf x}^\adj {\bf yy}^\adj {\bf x}) $, which requires only $\order(nr^2)$ flops.
In quantum information, another common metric is the trace distance~\cite{nielsen2010quantum} 
$ \|\X-{\bf Y}\|_*$, 
where $\|\cdot\|_*$ is the nuclear norm. This calculation  requires $\order(n^3)$ flops if calculated directly but can also be calculated cheaply via $\SVDlowrank$ on ${\bf U}={\bf V}=[{\bf x},{\bf y}]$ and ${\bf D}=[\mathbb{I},{\bf 0};{\bf 0},-\mathbb{I}]$.
\newcommand{\hf}[1]{#1^{1/2}}
The third common metric is the fidelity~\cite{nielsen2010quantum} 
given by $\| \hf{\X} \hf{{\bf Y}} \|_*$.  If either $\X$ or ${\bf Y}$ is rank-1, this can be calculated cheaply as well. 

\subsection{Results}

\begin{figure}[ht]
    \centering
    \includegraphics[width=.42\columnwidth]{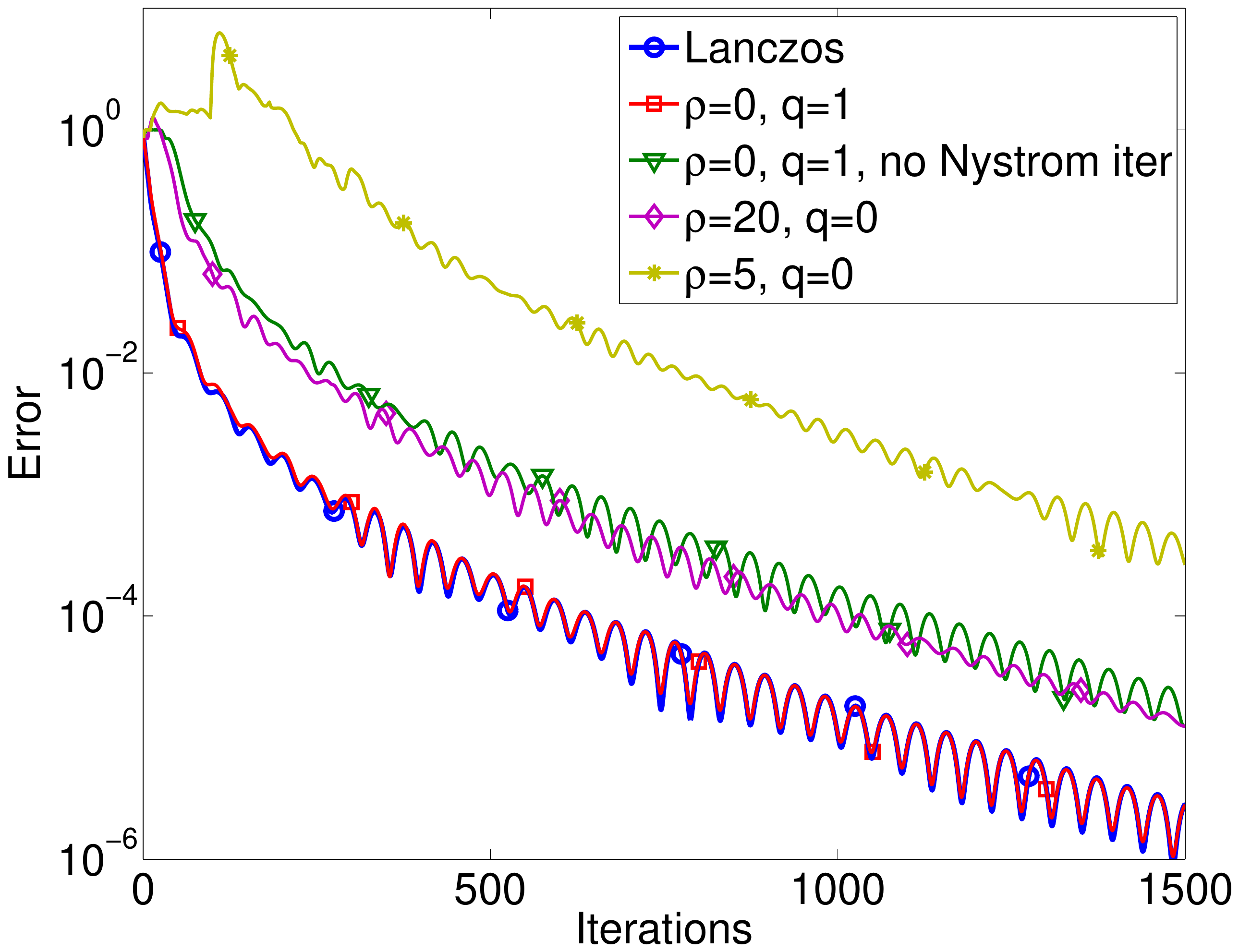} 
    \includegraphics[width=.42\columnwidth]{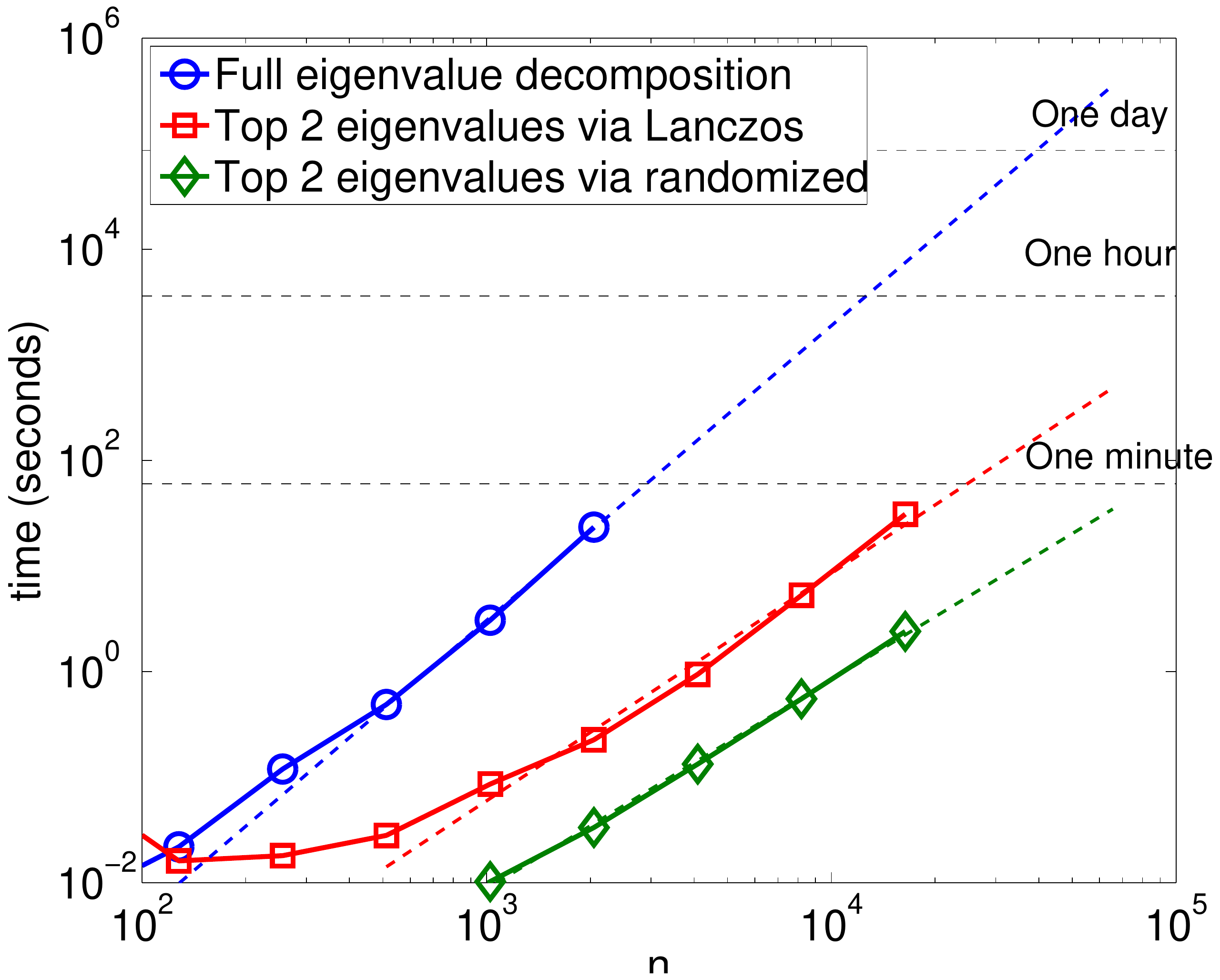}
    \caption{(Left) Convergence rate as a function of parameters to \SVDfcn/\EIGfcn. 
    (Right) Comparison of just eigenvalue computation times via three methods.
    }
    \label{fig:convergence}
\end{figure}

\begin{figure}[ht]
    \centering
    \includegraphics[width=.42\columnwidth]{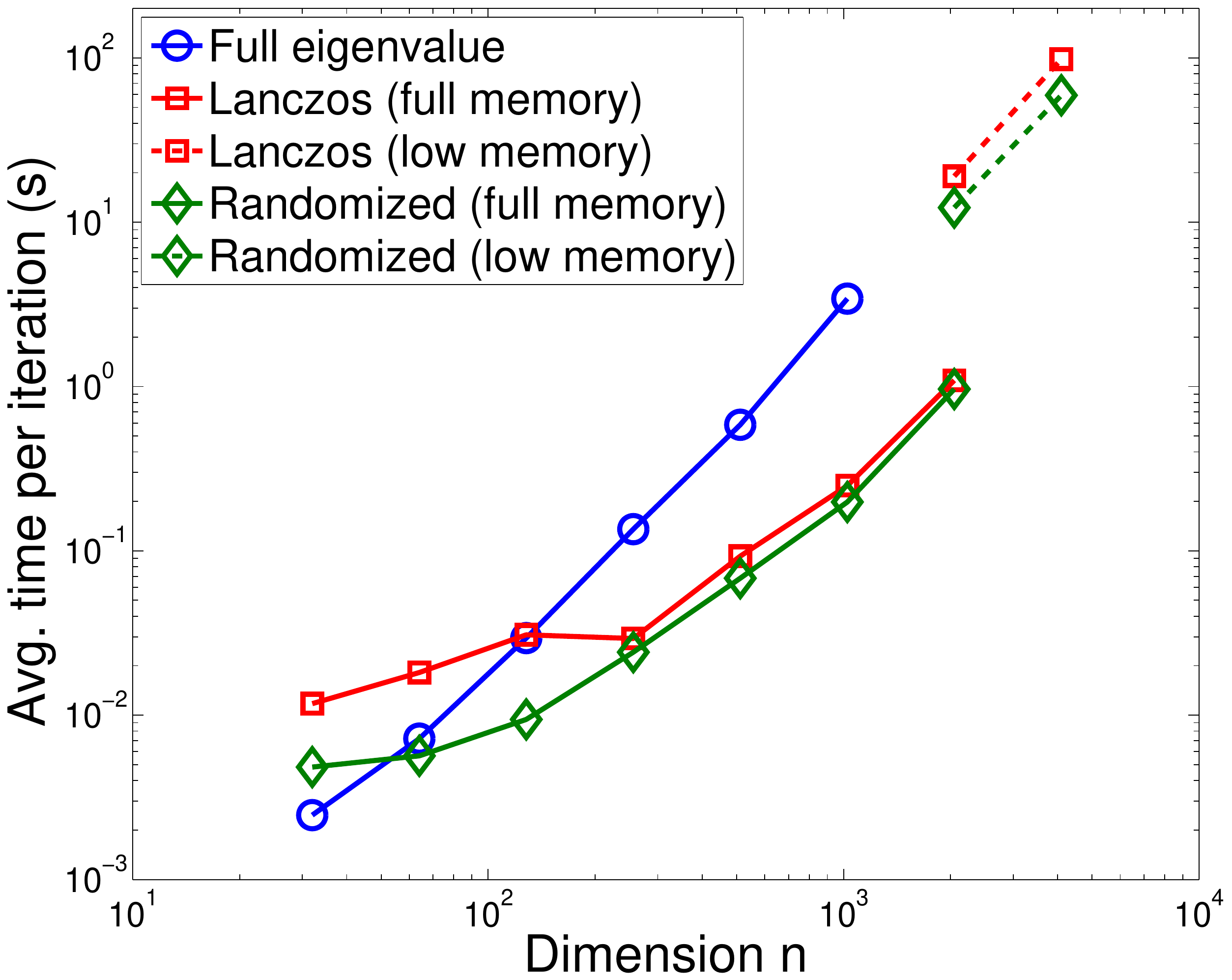} 
    \includegraphics[width=.42\columnwidth]{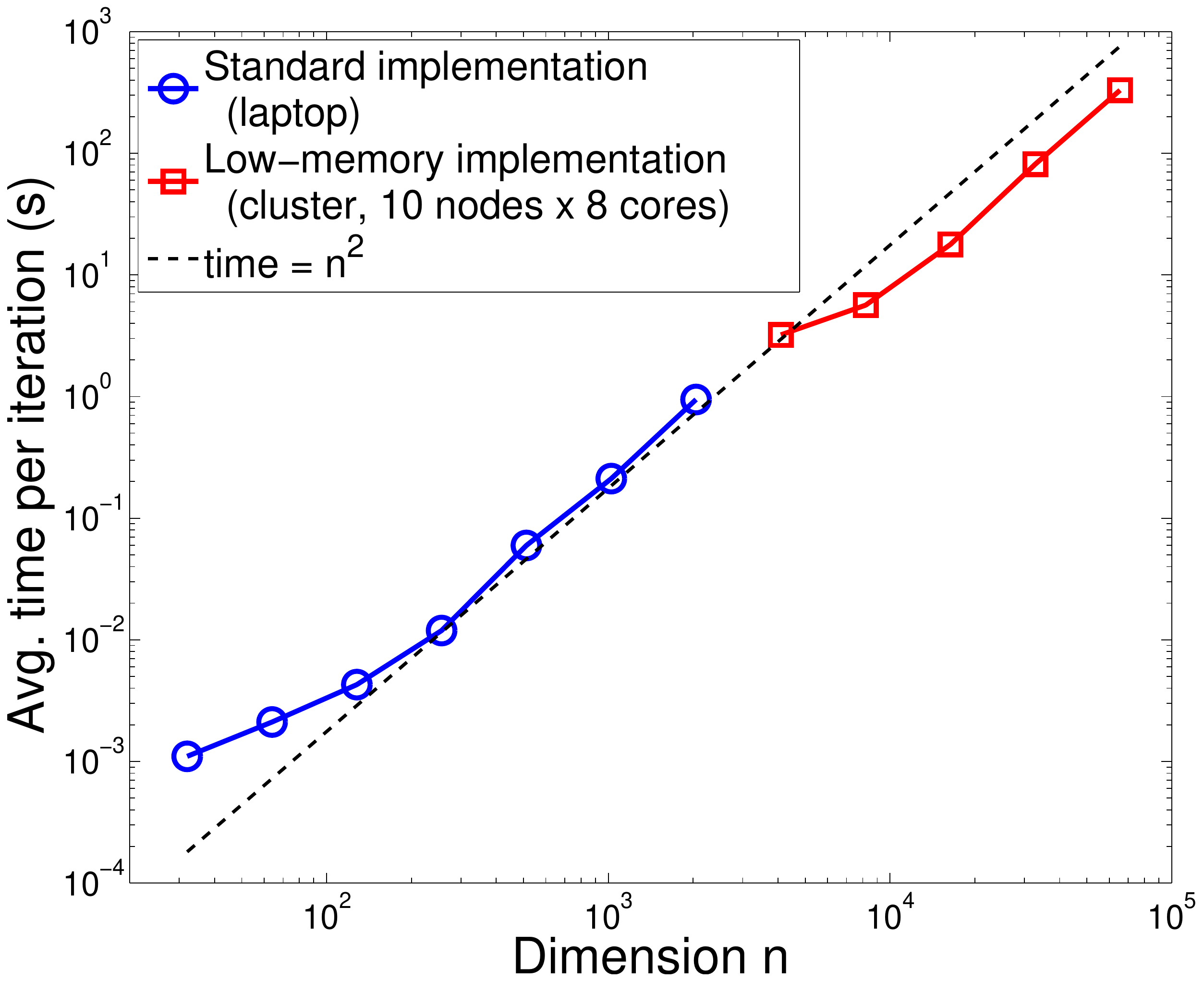}
    \caption{Mean time of 10 iterations: this includes the matrix multiplications as well as eigenvalue computations.
    (Left) shows times for a complete iteration of our method on a single computer using sparse matrix multiplies (``full memory'') and, above $11$ qubits, the custom low-memory implementation as well (not multi-threaded) on the same computer. (Right) shows times for just the \SVDfcn/\EIGfcn. 
    }
    \label{fig:iterSpeed}
\end{figure}

\begin{figure}[ht]
    \centering
    \newlength{\mySubfigSize}
    \setlength{\mySubfigSize}{4.7cm}
    \includegraphics[width=\mySubfigSize]{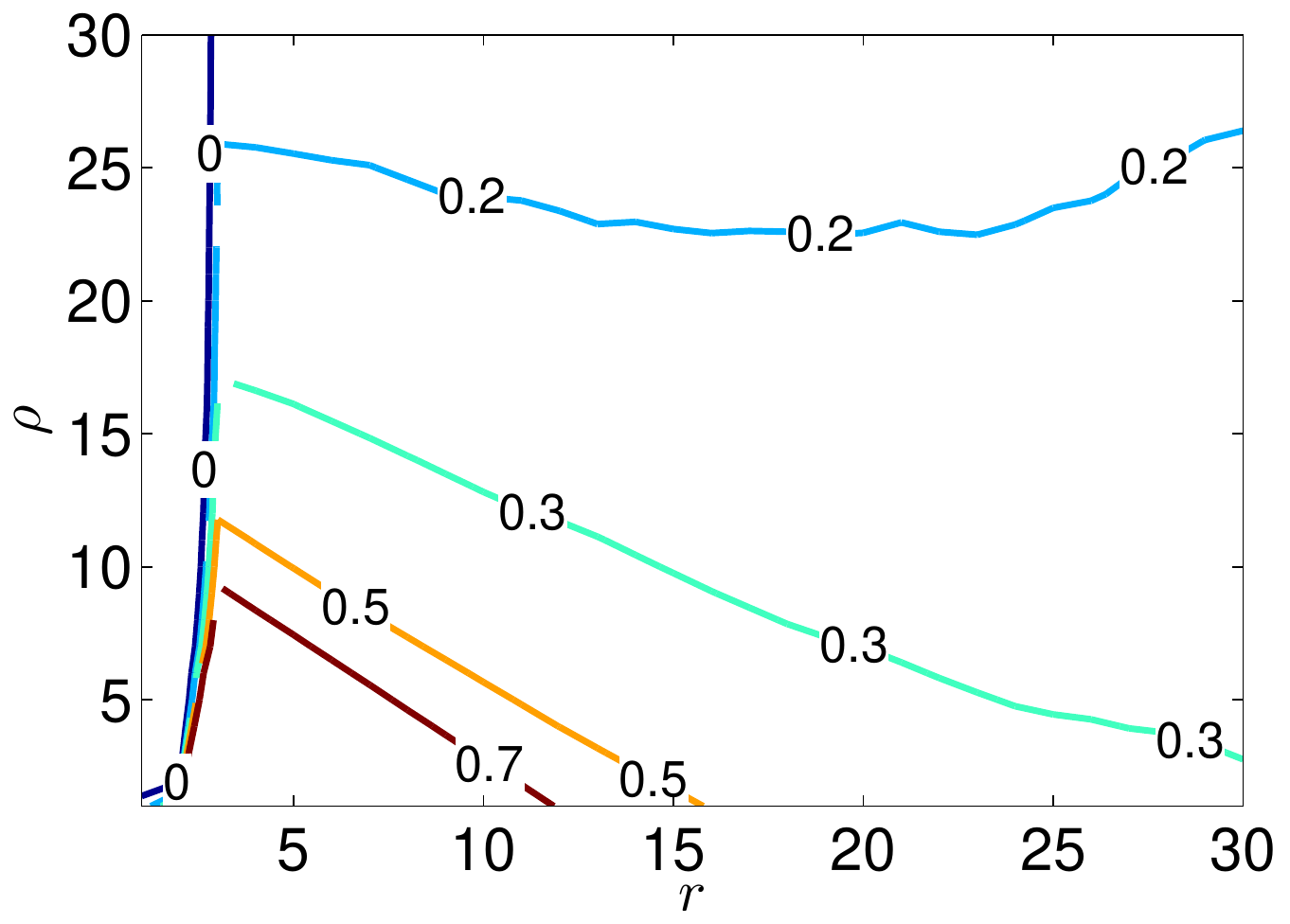}
    \includegraphics[width=\mySubfigSize]{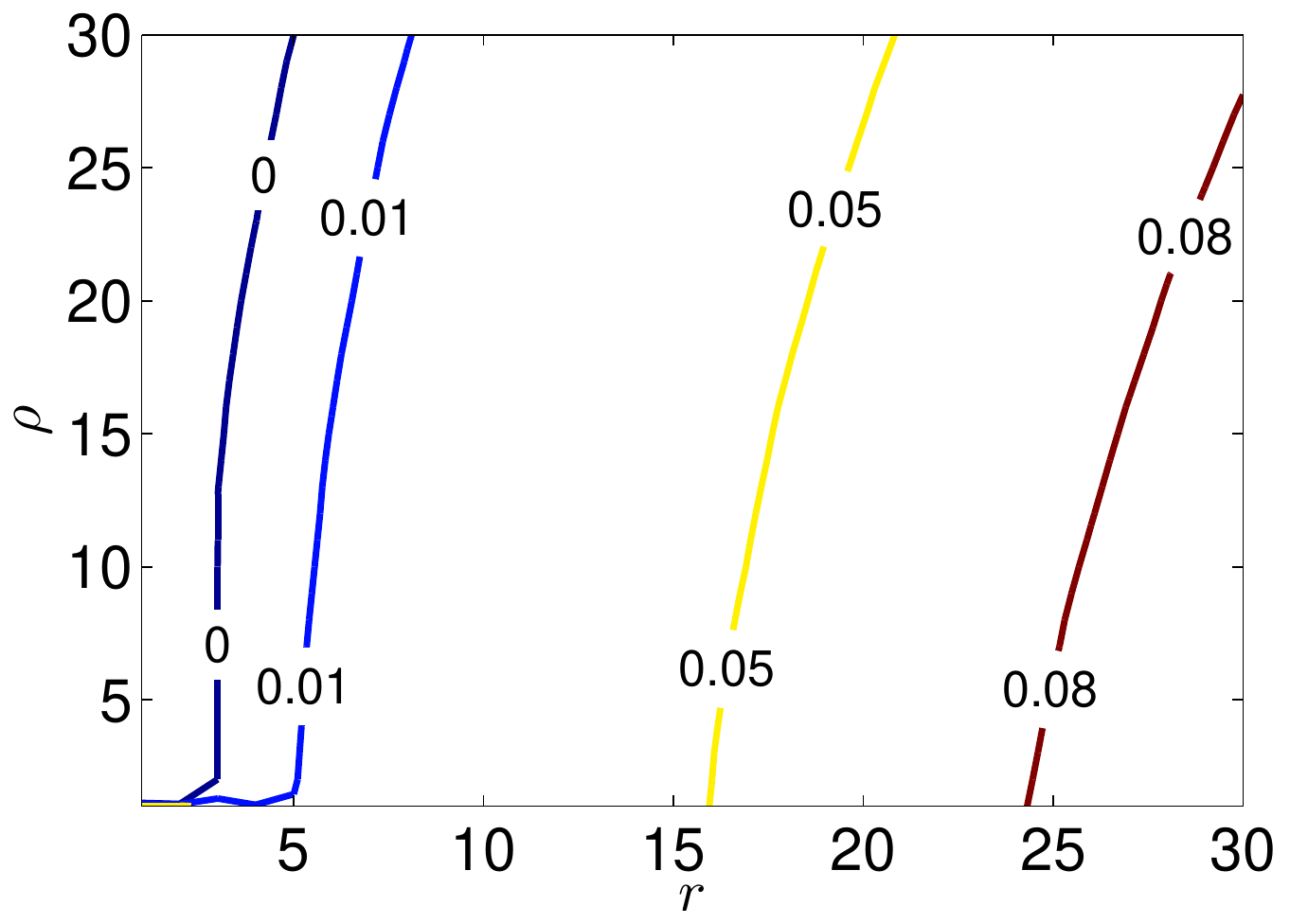} \\
    \includegraphics[width=\mySubfigSize]{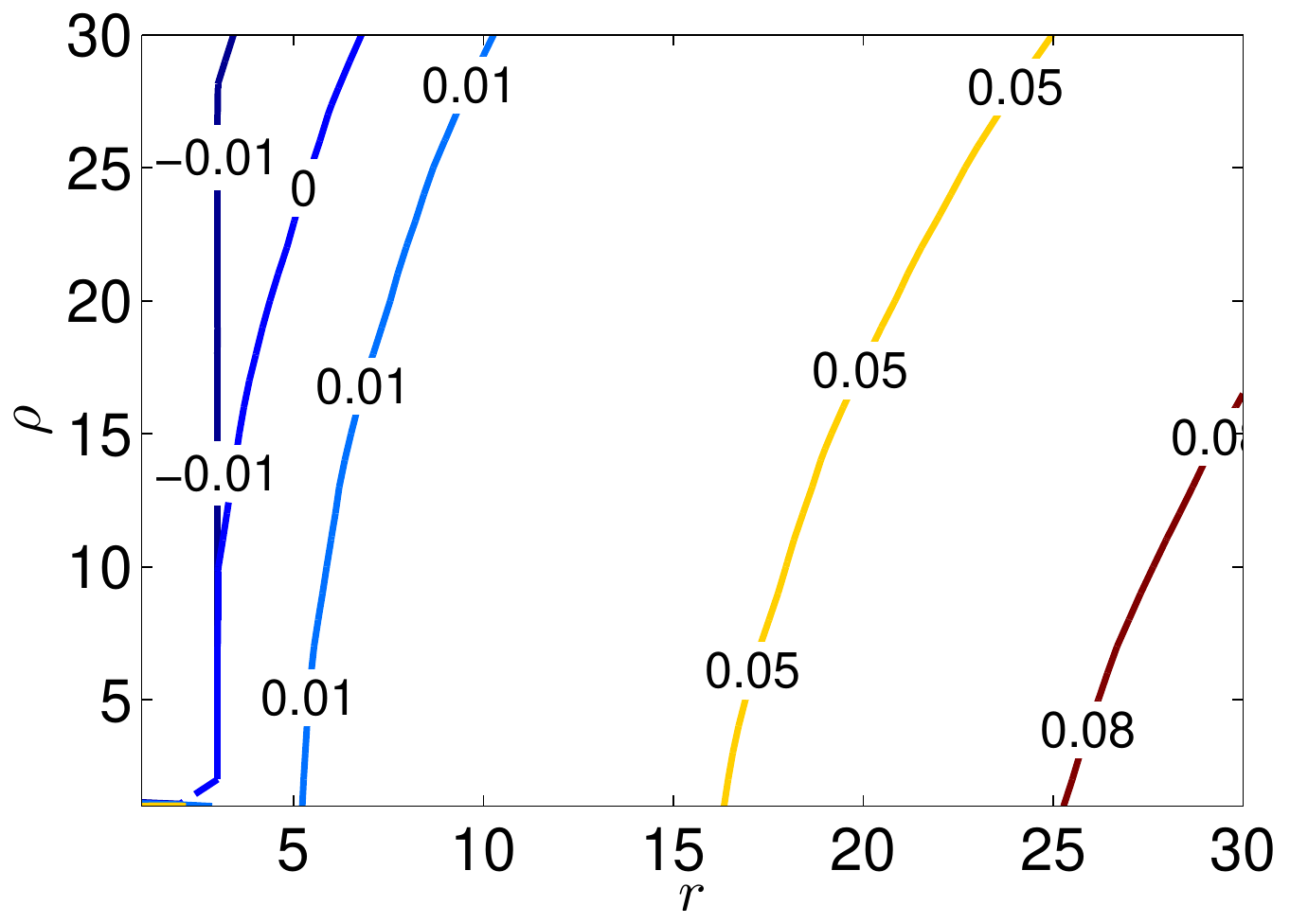}
    \includegraphics[width=\mySubfigSize]{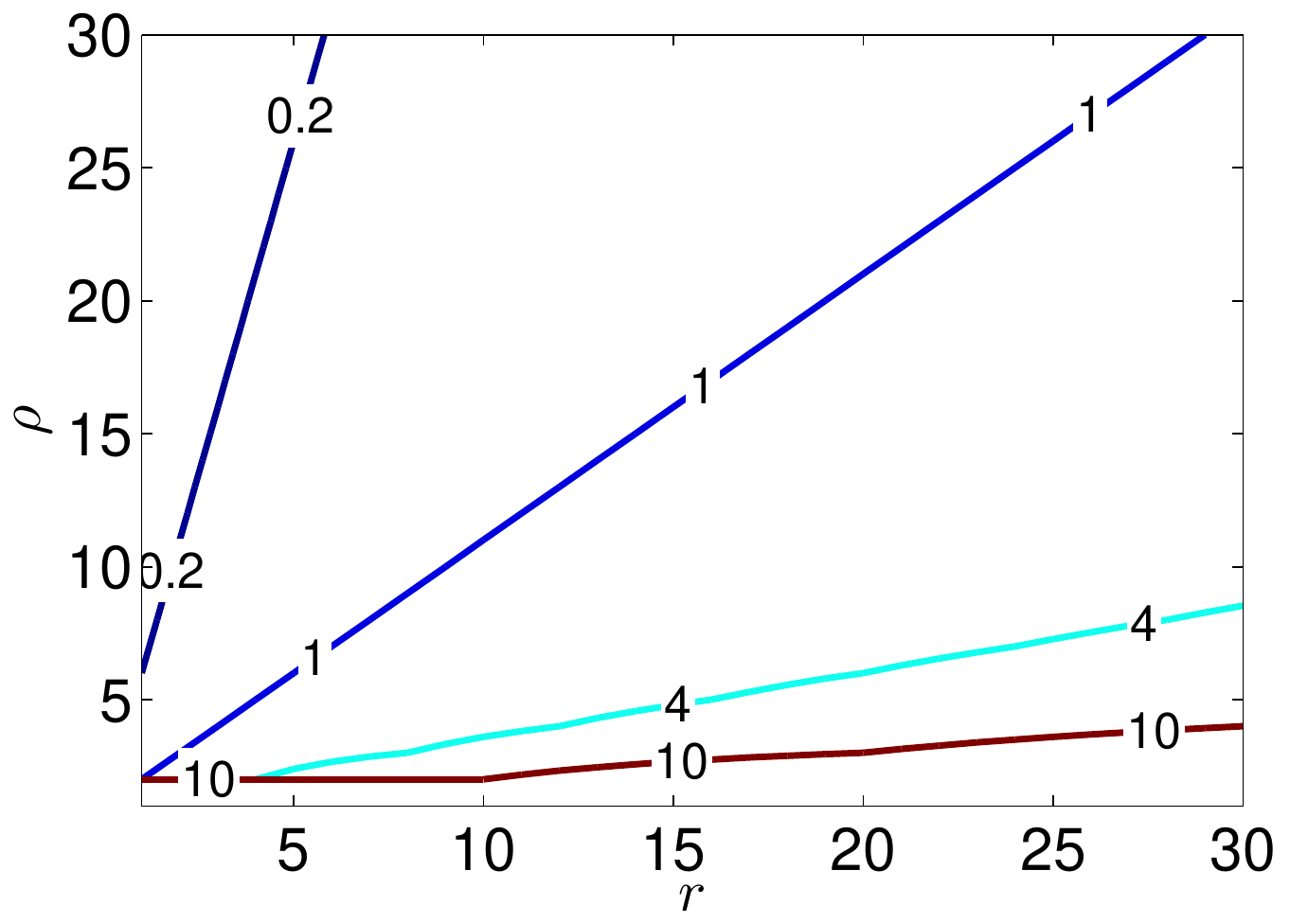}
    \caption{Top row: $\widetilde{\epsilon}$ for (left) $q=0$ and (right) $q=1$ power iterations. Bottom row: $\widetilde{\epsilon}$ for $q=2$ power iterations (left), and (right) shows the bound $\epsilon$.}
    \label{fig:epsilon}
\end{figure}

Figure~\ref{fig:convergence} (left) plots convergence and accuracy results for a quantum tomography problem with 8 qubits and $\numsam=4rn$ with $r=1$. The SVP algorithm works well on noisy problems but we focus here on a noiseless (and truly low-rank) problem in order to examine the effects of approximate SVD/eigenvalue computations. The figure shows that the power method with $q\ge 1$ is extremely effective even though it lacks theoretical guarantees; without the power method, take $\rho \simeq 20$ and we see convergence, albeit slower. When $\numsam$ is smaller and the R-RIP is not satisfied, taking $\rho$ or $q$ too small can lead to non-convergence.

Figure~\ref{fig:convergence} (right) 
is a direct comparison of $\EIGfcn$ (with $\rho=5$ and $q=3$) and the Lanczos method for multiplies of the type encountered in the algorithm. The $\EIGfcn$ has the same asymptotic complexity but much better constants.

Figure~\ref{fig:iterSpeed} shows that because the eigenvalue decomposition is a significant portion of the computational cost, using $\EIGfcn$ instead of Lanczos makes a difference. The difference is not pronounced in the small-scale full-memory implementation because the variable $\X$ is explicitly formed and matrix multiplies are relatively cheap compared to other operations in the code. For larger dimensions with the low-memory code, $\X$ is never explicitly formed and multiplying with the gradient is quite costly. The randomized method requires fewer multiplies, explaining its benefit. For 12 qubits, the Lanczos method averages 98.4 seconds/iteration, whereas the randomized method averages just 59.2 seconds.
The right subfigure shows that the low-memory implementation (which has memory requirement $\order(rn)$) still has only $\order(n^2)$ time complexity per iteration. 

Figure~\ref{fig:epsilon} tests Theorem~\ref{thm:Joel} by plotting the value of 
$$ \widetilde{\epsilon} = \|\X-\widetilde{\X}\|_F^2/\|\X-\X_r\|_F^2 - 1 $$
(which is bounded by $\epsilon$) for matrices $\X$ that are generated by the iterates of the algorithm. The algorithm is set for $r=1$ (so $\X$ is the sum of a rank 2 term, which includes the Nesterov term, and the full rank gradient), but the plots consider a range of $r$ and a range of oversampling parameters $\rho$. The plots use $q=0,1$ (top row, left to right) and $q=2$ (bottom row, left) power iterations. 
Because $\widetilde{\X}$ has rank $\ell=r+\rho$, it is possible for $\widetilde{\epsilon} < 0$, as we observe in the plots when $r$ is small and $\rho$ is large. For two power iterations, the error is excellent. In all cases, the observed error $\widetilde{\epsilon}$ is much better than the bound $\epsilon$ (shown bottom row, right) from Theorem~\ref{thm:Joel}, suggesting that it may be possible to have a more refined analysis.

Finally, to test scaling to very large data, we compute a 16 qubit state ($n=65536$), using a known quantum state as input, with realistic quantum mechanical perturbations (global depolarizing noise of level $\gamma=0.01$; see \cite{flammia2012quantum}) as well as AWGN to give a SNR of 30~dB, and $\numsam=5n=327680$ measurements. The first iteration uses Lanczos and all subsequent iterations use $\EIGfcn$ using $\rho=5$ and $q=3$ power iterations. On a cluster with 10 computers, the mean time per iteration is $401$ seconds.
The table in Fig.~\ref{fig:16} (left) 
shows the error metrics of the recovered matrix, and Fig.~\ref{fig:16} (right) 
plots the convergence rate of the Frobenius-norm error and trace distance. 

\begin{figure}[t]
    \centering
\begin{tabular}[b]{llll}  
\toprule
  & Trace distance  & Fidelity  & \\
$\|\x-\x^\star\|_F$	& $\|\X-\X^\star\|_*$ & $F(\X,\X^\star) $ & $F(\X,\X^\star)^2$ \\
\midrule
0.0256 &   0.0363  &  0.9998  &  0.9997 \\ 
 \bottomrule
\end{tabular} 
  \raisebox{-0.35\height}{ 
  \includegraphics[width=2.3in]{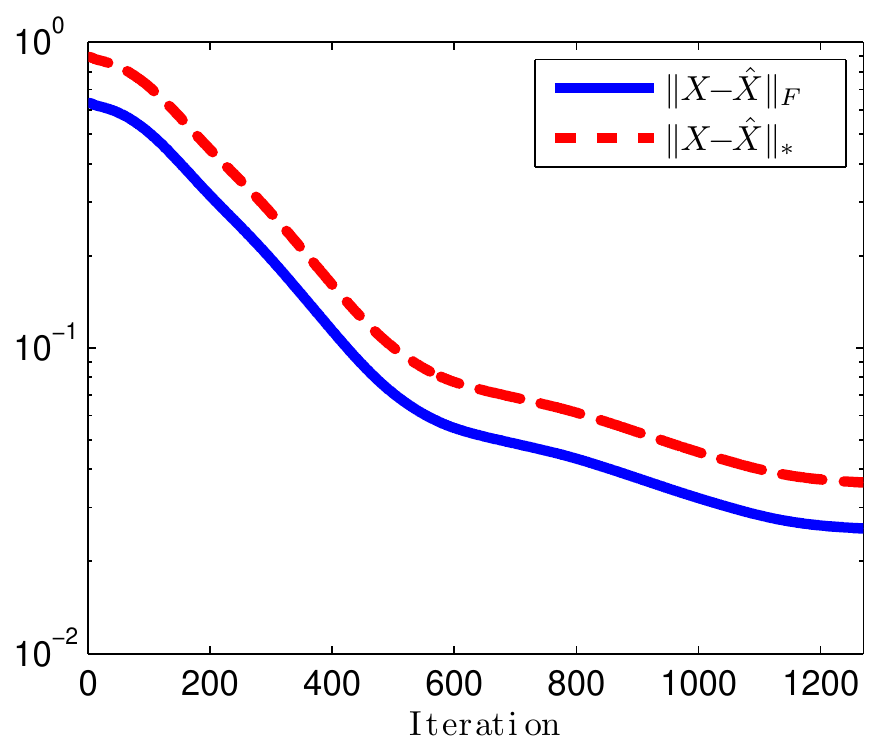}
  }
  \caption{ The table (left) 
  shows error metrics for the noisy rank-1 16-qubit recovery. The figure (right) 
  shows the convergence rate for the same simulation.} 
  \label{fig:16}
\end{figure}

\begin{figure}[t]
    \centering
    \includegraphics[width=.49\columnwidth]{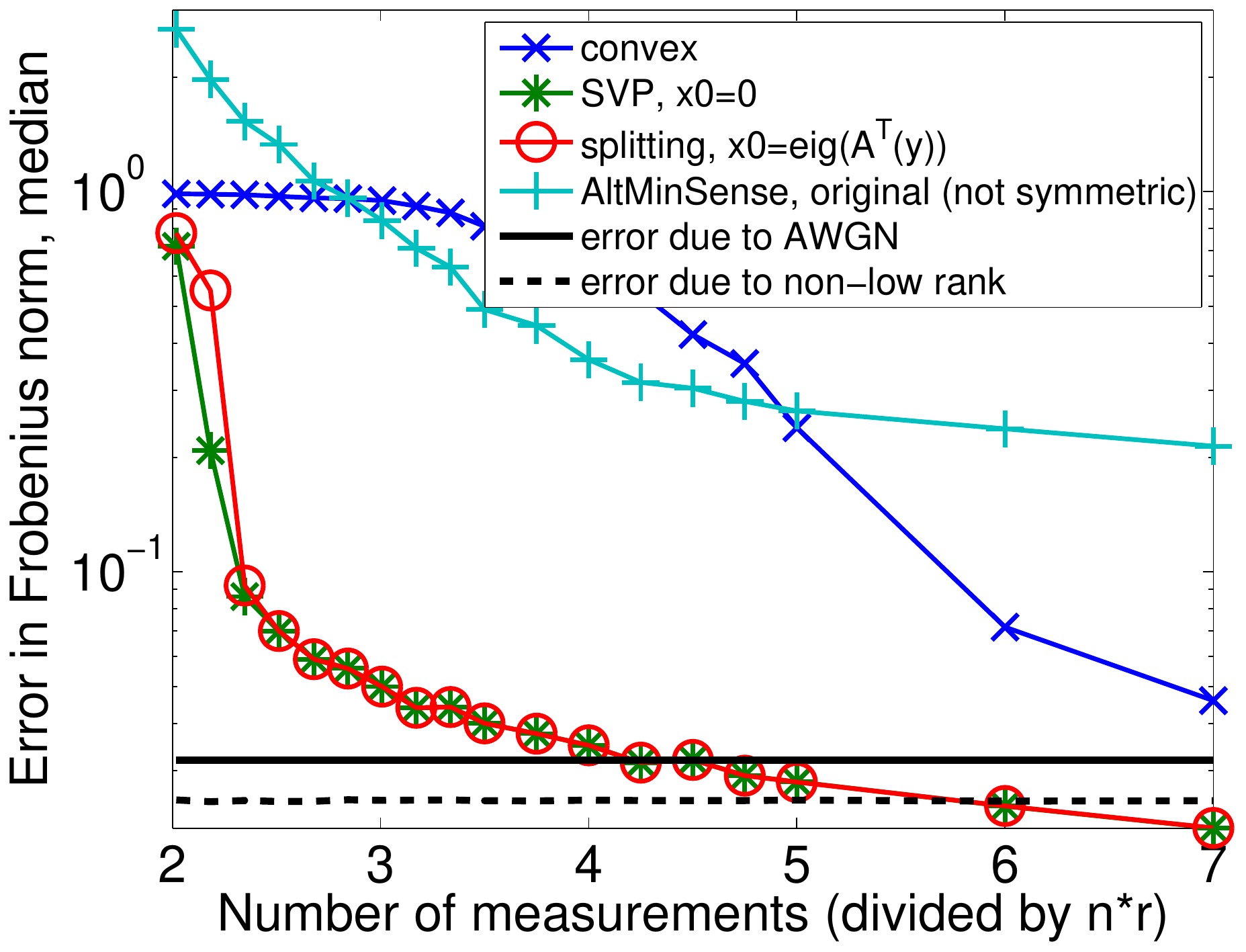}
	\caption{Accuracy comparison of several algorithms, as a function of number of samples $\numsam$. Each point is the median of the results of 20 simulations.}
	\label{fig:accuracyComparisons}
\end{figure}
Figure~\ref{fig:accuracyComparisons} reports the median error on 20 test problems across a range of $\numsam$. Here, $\x^\star$ is only approximately low rank and $y$ is contaminated with noise. We compare the convex approach~\cite{flammia2012quantum}, the ``AltMinSense'' approach~\cite{alternatingMinRIP}, and a standard splitting approach. AltMinSense and the convex approach have poor accuracy; the accuracy of AltMinSense can be improved by incorporating symmetry, but this changes the algorithm fundamentally and the theoretical guarantees are lost. The splitting approach, if initialized correctly, is accurate, but lacks guarantees. Furthermore, it is slower in practice due to slower convergence, though for some simple problems (i.e., no convex constraints $\CC$) it is possible to accelerate using L-BFGS~\cite{FrankWolfeOMP}.

\section{Conclusion}
Randomization is a powerful tool to accelerate and scale optimization algorithms, and it can be rigorously included in algorithms that are robust to small errors.
In this paper, we leverage randomized approximations to remove memory bottlenecks by merging the two-key steps of most recovery algorithms in affine rank minimization problems: gradient calculation and low-rank projection. Unfortunately, the current black-box approximation guarantees, such as Theorem~\ref{thm:Joel}, are too pessimistic to be directly used in theoretical characterizations of our approach. For future work, motivated by the overwhelming empirical evidence of the good performance of our approach, 
we plan to directly analyze the impact of randomization in characterizing the algorithmic performance.

\section*{Acknowledgment}
VC and AK's work was supported in part by the European Commission under Grant MIRG-268398, ERC Future Proof, SNF 200021-132548, and ARO MURI W911NF0910383. 
SRB is supported by the Fondation Sciences Math\'ematiques de Paris. The authors thank Alex Gittens for his insightful comments and Yi-Kai Liu and Steve Flammia for helpful discussions.

\appendix
\section{Proofs}
\newcommand{\wx}{\widetilde{\x}}
\newcommand{\el}{\ell} 
\newcommand{\dimension}{m \times n}
\newcommand{\nablaS}{\nabla_{\mathcal{S}}}
\newcommand{\projS}{\proj_{\mathcal{S}}}
\begin{proof}[Proof of Theorem~\ref{thm: invariant}]
There are three aspects to the proof. Even without approximate SVD calculations, the problem is non-convex, so we must leverage the R-RIP to prove that iterates converge. Mixed in with this calculation is the approximate nature of our rank $\ell$ point $\wx_{i+1}$, where we will apply the bounds from Theorem~\ref{thm:Joel}. Finally, we relate $\wx_{i+1}$ to its rank $r$ version $\x_{i+1}$.

An important definition for our subsequent developments is the following:
\begin{definition}[$\epsilon $-approximate low-rank projection] \label{def:appr_svd}
Let $ \X $ be an arbitrary matrix. For any $\epsilon > 0$, $ \mathcal{P}_{r',\el'}^{\epsilon}(\X) $ provides a rank-$\el'$ matrix approximation to $\X$ such that
\begin{align}
\E \vectornormbig{\mathcal{P}_{r',\el'}^{\epsilon}(\X) - \X}_F^2 \leq (1 + \epsilon) \vectornormbig{\mathcal{P}_{r'}(\X) - \X}_F^2, \label{eq:appr_svd:00}
\end{align} where $ \mathcal{P}_{r'}(\X) \in \argmin_{\Y: \rank(\Y) \leq r'} \vectornorm{\X - \Y}_F $.
\end{definition}

\newcommand*\xbar[1]{%
  \hbox{%
    \vbox{%
      \hrule height 0.5pt 
      \kern0.5ex
      \hbox{%
        \kern-0.1em
        \ensuremath{#1}%
        \kern-0.1em
      }%
    }%
  }%
} 

\newcommand{\StrongConvexity}{M} 
Let $\x_i$ be the putative rank $r$ solution at the $i$-th iteration, $\x^\star$ be the rank $r$ matrix we are looking for and $\wx_{i+1}$ be the rank $l$ matrix, obtained using approximate SVD calculations. Define $L := 2(1+\delta_{r + \el})$ and $\StrongConvexity := 2(1-\delta_{2r})$. 
Then, we have:
\begin{align}
f(\wx_{i+1}) &= f(\x_{i}) + \langle \nabla f(\x_i),~\wx_{i+1} - \x_i\rangle + \| \linmap( \wx_{i+1} - \x_i ) \|_F^2 \nonumber \\
                 &\leq f(\x_{i}) + \langle \nabla f(\x_i),~\wx_{i+1} - \x_i\rangle + \frac{L}{2}\| \wx_{i+1} - \x_i \|_F^2 \nonumber \\
				 &=f(\x_{i}) -\frac{1}{2L}\|\nabla  f(\x_i)\|_F^2 + \frac{L}{2}\left(\|\wx_{i+1} - \x_i \|_F^2 + 2\langle \frac{1}{L}\nabla  f(\x_i),~\wx_{i+1} - \x_i\rangle + \frac{1}{L^2}\|\nabla f(\x_i)\|_F^2\right) \nonumber \\
				 &=f(\x_{i}) -\frac{1}{2L}\|\nabla  f(\x_i)\|_F^2 + \frac{L}{2} \|\wx_{i+1} - \left(\x_i - \frac{1}{L}\nabla  f(\x_i)\right)\|_F^2. \label{eq:01}
             \end{align} By construction $\wx_{i+1} \in \mathcal{P}_{r,\el}^{\epsilon}\left( \x_i - \frac{1}{L}\nabla  f(\x_i)\right)$ (since the step-size is $\mu=1/L$), so, for $\xbar{\x}_{i+1} \in \mathcal{P}_{r}\left( \x_i - \frac{1}{L}\nabla  f(\x_i)\right)$, 
\begin{align}
\E  \|\wx_{i+1} - (\x_i - \frac{1}{L}\nabla  f(\x_i))\|_F^2 &\leq (1+\epsilon) \|\xbar{\x}_{i+1} - (\x_i - \frac{1}{L}\nabla  f(\x_i))\|_F^2 \nonumber \\
& \le (1+\epsilon)\|\x^{\star} - (\x_i - \frac{1}{L}\nabla  f(\x_i))\|_F^2 \label{eq:02} 
\end{align}
by the definition of $\mathcal{P}_{r}(\cdot)$ (since $\rank(\x^{\star}) = r$). 
Combining \eqref{eq:02} with \eqref{eq:01}, we obtain:
\begin{align}
\E f(\wx_{i+1}) &\leq f(\x_{i}) - \frac{1}{2L} \|\nabla f(\x_i)\|_F^2 + \frac{L}{2} (1+\epsilon) \| \x^{\star} - \x_i + \frac{1}{L}\nabla  f(\x_i) \|_F^2 \nonumber \\
		&=f(\x_{i}) - \frac{1}{2L} \|\nabla  f(\x_i)\|_F^2 + (1+\epsilon)\left( \frac{1}{2L}  \|\nabla  f(\x_i)\|_F^2 + \langle \nabla  f(\x_i),~\x^{\star} - \x_i\rangle + \frac{L}{2}\| \x^{\star} - \x_i\|_F^2 \right) \nonumber \\
        &\leq (1+\epsilon)\left[f(\x_{i}) + \langle \nabla  f(\x_i),~\x^{\star} - \x_i\rangle + \frac{L}{2}\| \x^{\star} - \x_i\|_F^2\right] + \frac{\epsilon}{2L} \|\nabla  f(\x_i) \|_F^2  \label{eq:90}
\end{align} where we use the fact that $f(\x_i)\ge 0$ in the last inequality. 
Due to the restricted strong convexity of $f$ that follows from the restricted isometry property, 
we have:
\begin{align}
f(\x^{\star}) &\geq f(\x_i) + \langle \nabla  f(\x_i), \x^{\star} - \x_i\rangle + \frac{\StrongConvexity}{2}\|\x^{\star} - \x_i\|_F^2 \nonumber \\
f(\x^{\star}) - \frac{\StrongConvexity}{2}\|\x^{\star} - \x_i\|_F^2 &\geq f(\x_i) + \langle \nabla  f(\x_i), \x^{\star} - \x_i\rangle \nonumber
\end{align} which, combined with \eqref{eq:90}, leads to:
\begin{align}
\E f(\wx_{i+1}) &\leq (1+\epsilon)\left[f(\x^{\star}) + \frac{L - \StrongConvexity}{2}\| \x^{\star} - \x_i\|_F^2\right] + \frac{\epsilon}{2L} \|\nabla f(\x_i) \|_F^2 \label{eq:03}
\end{align}

Due to the R-RIP,  
\begin{align}
    \|\x^{\star} - \x_i\|_F^2 \leq \frac{\|\sensing  (\x^{\star} - \x_i)\|_2^2}{1-\delta_{2r}} \label{eq:04} 
\end{align} 
Now define a constant $C$ and assume $f(\x_i)  = \|\obs - \sensing \x_i\|_2^2 > C^2\|\noise\|_2^2$ (if the assumption fails, it means $\x_i$ is already close to $\X^\star$). In particular, in the noiseless case $\|\noise\|=0$, we may pick $C$ arbitrarily large and set all $1/C$ terms to zero. 
\begin{align}
\|\sensing(\x^{\star} - \x_i)\|_F^2 &= \|\obs - \sensing(\x_i) - \noise\|_2^2 \nonumber \\ 
										  &= \|\obs - \sensing(\x_i)\|_2^2 + \|\noise\|_2^2 - 2\langle \noise, \obs - \sensing(\x_i) \rangle \nonumber \\
			  							&\leq f(\x_i) + \|\noise\|_2^2 + 2\|\noise\|_2 \|\obs - \sensing(\x_i)\|_2 \nonumber \\
										&\leq f(\x_i) + \|\noise\|_2^2 + \frac{2}{C}f(\x_i) \label{eq:05}
\end{align}
Substituting \eqref{eq:05} and \eqref{eq:04} into \eqref{eq:03}, expanding the values of $L$ and $M$, and noting that 
$f(\x^{\star}) = \| \obs - \sensing (\x^{\star}) \|_2^2 = \|\noise \|_2^2$, gives
\begin{align}
\E f(\wx_{i+1}) &\leq (1+\epsilon)\left[\|\noise\|_2^2 + \frac{\delta_{r+\el} + \delta_{2r}}{1-\delta_{2r}}\left(f(\x_i) + \|\noise\|_2^2 + \frac{2}{C}f(\x_i)\right)\right] + \frac{\epsilon}{2L} \|\nabla f(\x_i) \|_F^2 \\ 
				 &\leq (1+\epsilon)\left[\frac{\delta_{r+\el} + \delta_{2r}}{1-\delta_{2r}}\left(1+\frac{2}{C}\right)f(\x_i) + \left(1 + \frac{\delta_{r+\el} + \delta_{2r}}{1-\delta_{2r}}\right)\|\noise\|_2^2\right] + \frac{\epsilon}{2L} \|\nabla f(\x_i) \|_F^2 \label{eq:previousResult1}
\end{align}
We bound $\|\nabla f(\x_i)\|$ using our assumption on the magnitude of $\|\sensing\|$:
\begin{equation} \label{eq:new_result}
\|\nabla f(\x_i) \|_F^2 
            = 4 \|\sensing^{\ast}\left (\obs - \sensing(\x_i)\right)\|_F^2 
            \leq 4 \|\sensing^{\ast}\|^2 \| \obs - \sensing(\x_i) \|_2^2 
			= 4 \|\sensing\|^2 f(\x_i)
            \leq 4 \frac{mn}{\numsam} f(\x_i)
\end{equation}
For quantum tomography, we even have $\sensing \sensing^{\ast} = \frac{mn}{\numsam} \mathcal{I}$, so the inequality holds with equality (and $m=n$).

Combining \eqref{eq:previousResult1} with \eqref{eq:new_result} and by the definition of $L$, we obtain:
\begin{align}
\E f(\wx_{i+1}) &\leq (1+\epsilon)\left[\frac{\delta_{r+\el} + \delta_{2r}}{1-\delta_{2r}}\left(1+\frac{2}{C}\right)f(\x_i) + \left(1 + \frac{\delta_{r+\el} + \delta_{2r}}{1-\delta_{2r}}\right)\|\noise\|_2^2\right] + \frac{\epsilon}{1 + \delta_{r + \ell}}\cdot \frac{mn}{\numsam}  f(\x_i) \\ 
                 &= \underbrace{\left( \frac{\epsilon}{1 + \delta_{r + \ell}}\cdot \frac{mn}{\numsam} + (1+\epsilon)\frac{\delta_{r+\el} + \delta_{2r}}{1-\delta_{2r}}\left(1+\frac{2}{C}\right)\right)}_{\theta'}f(\x_i)  + 
                 \underbrace{(1+\epsilon)\left(1 + \frac{\delta_{r+\el} + \delta_{2r}}{1-\delta_{2r}}\right)}_{\tau'}\|\noise\|_2^2  \label{eq:previousResult}
\end{align}

Note that if an exact SVD computation is used, then not only is $\epsilon=0$ but also $\wx_{i+1}$ is rank $r$, so we are done and can use $\theta=\theta'$ and $\tau=\tau'$.   To finish the proof, we now relate $\E f(\x_{i+1})$ to $\E f(\wx_{i+1})$. In the algorithm, $\x_{i+1}$ is the output of \SVDfcn, and $\wx_{i+1}$ is the intermediate value $U\Sigma V^\adj$ on line 10 of Algo.~\ref{algo:rankProjection}. Given $\wx_{i+1}$ with $\rank(\wx_{i+1}) = \el > r$, $\x_{i+1}$ is defined as the best rank-$r$ approximation to $\wx_{i+1}$.\footnote{%
\newcommand{\px}{\proj_r(\wx_{i+1})}%
If we include a convex constraint $\CC$ then instead of defining $\x_{i+1} = \px$ we have $\x_{i+1} = \proj_C( \px )$.
In this case, $$\|  \proj_C( \px ) - \x^\star\|_F = \|  \proj_C( \px  - \x^\star )\|_F \le \| \px - \x^\star\|_F.$$ The first equality follows from $\x^\star \in \CC$ and the second is true 
since the projection onto a non-empty closed convex set is non-expansive. Hence the result in \eqref{eq:triangle_ineq} still applies when we include the $\CC$ constraints.
}
Thus, the following inequality holds true:
\begin{align}
\|\x_{i+1} - \x^{\star}\|_F &= \|\x_{i+1} - \wx_{i+1} + \wx_{i+1} - \x^{\star}\|_F \nonumber \\ 
										  &\leq \|\x_{i+1} - \wx_{i+1}\|_F + \|\wx_{i+1} - \x^{\star}\|_F \nonumber \\ 
										  &\leq 2\|\wx_{i+1} - \x^{\star}\|_F \label{eq:triangle_ineq}
\end{align} since $\|\x_{i+1} - \wx_{i+1}\|_F \leq \|\x^{\star} - \wx_{i+1}\|_F $. 
In particular, since the above is valid for any value of the random variable $\wx_{i+1}$, 
$\E\; \|\x_{i+1} - \x^{\star}\|_F^2 \le   \E \; 4\|\wx_{i+1} - \x^{\star}\|_F^2 $.
This bound is pessimistic and in practice the constant is close to 1 rather than 4.

We will again assume that $f(\wx_{i+1}),f(\x_{i+1}) \ge C^2\|\noise\|_2^2$, and $C > 2$, since otherwise the current point is a good-enough solution.  We have:
\begin{align*}
f(\x_{i+1}) = \|\obs - \sensing (\x_{i+1})\|_2^2 &= \|\sensing(\x^{\star} - \x_{i+1}) + \noise \|_2^2 \\
     &= \|\sensing(\x^{\star} - \x_{i+1})\|_2^2 + \|\noise \|_2^2 + 2\langle \sensing (\x^{\star} - \x_{i+1}), \noise \rangle  \\
     &= \|\sensing(\x^{\star} - \x_{i+1})\|_2^2 + \|\noise \|_2^2 + 2\langle \obs - \sensing (\x_{i+1}) - \noise, \noise \rangle  \\
     &= \|\sensing(\x^{\star} - \x_{i+1})\|_2^2 + \|\noise \|_2^2 + 2\langle \obs - \sensing (\x_{i+1}), \noise \rangle + 2\langle -\noise, \noise \rangle  \\
     &\leq \|\sensing(\x^{\star} - \x_{i+1})\|_2^2 + \|\noise \|_2^2 + 2\| \obs - \sensing (\x_{i+1})\|_2 \|\noise \|_2 - 2\| \noise\|_2^2  \\
     &\leq \|\sensing(\x^{\star} - \x_{i+1})\|_2^2 -\|\noise \|_2^2 + \frac{2}{C}f(\x_{i+1}) 
\end{align*}
    which, if $1 - 2/C \ge 0 $, implies
\begin{equation}
f(\x_{i+1}) \leq \frac{1}{1-2/C}\|\sensing(\x^{\star} - \x_{i+1})\|_2^2 - \frac{1}{1-2/C}\|\noise\|_2^2 \label{eq:brr}
\end{equation} By the R-RIP assumption, we have:
\begin{align}
\|\sensing(\x^{\star} - \x_{i+1})\|_2^2 \leq (1+\delta_{2r}) \|\x^{\star} - \x_{i+1}\|_F^2. \label{eq:RIPagain}
\end{align} Using \eqref{eq:triangle_ineq} and \eqref{eq:RIPagain} in \eqref{eq:brr}, we obtain:
\begin{align}
f(\x_{i+1}) &\leq \frac{4(1+\delta_{2r})}{1-2/C}\|\wx_{i+1} - \x^{\star}\|_F^2 - \frac{1}{1-2/C}\|\noise\|_2^2 \label{eq:eq00}
\end{align} Using the R-RIP property again, the following sequence of inequalities holds:
\begin{align}
\|\wx_{i+1} - \x^{\star}\|_F^2 &\leq \frac{\|\sensing(\wx_{i+1} - \x^{\star})\|_F^2}{1-\delta_{r + \el}} \nonumber \\
																 &\leq \frac{1 + 2/C}{1 - \delta_{r + \el}}f(\wx_{i+1}) + \frac{1}{1-\delta_{r + \el}}\|\noise\|_2^2 \label{eq:eq01}
\end{align} where the second inequality is obtained following same motions as \eqref{eq:05}. Combining \eqref{eq:eq00}-\eqref{eq:eq01} with \eqref{eq:previousResult}, we obtain:
\begin{align}
\E f(\x_{i+1}) &\leq \underbrace{\frac{4(1+\delta_{2r})}{1-2/C}\cdot\frac{1 + 2/C}{1 - \delta_{r + \el}}\cdot
\theta'}_{\theta} \cdot
f(\x_i) \nonumber 
				 + \underbrace{ \left( 
        \frac{4(1+\delta_{2r})}{1-2/C} \cdot \frac{1 + 2/C}{1 - \delta_{r + \el}} \cdot 
        \tau'
        + \frac{4(1+\delta_{2r})}{1-2/C}\cdot\frac{1}{1 - \delta_{r + \el}} - \frac{1}{1-2/C} \right)}_\tau \|\noise\|_2^2
\end{align}

Now we simplify the result to make it more interpretable. Define $\rho = \el - r$. Let $c$ be the smallest integer such that $\ell\ge(c-1)r$ (and for simplicity, assume $\ell=(c-1)r$)
so that $\delta_{r+\el} = \delta_{cr}$
and $\delta_{r+\el} + \delta_{2r} \le 2 \delta_{cr}$. 
By Theorem~\ref{thm:Joel}, $\epsilon \le \frac{r}{\rho-1}=\frac{r}{ (c-2)r -1 }$.
For concreteness, take $C\ge 4$ so that $1+2/C \le 3/2$ and $(1-2/C)^{-1} \le 2$.
Then
\begin{equation}
    \theta \le 12 \cdot \frac{1+\delta_{2r}}{1 - \delta_{cr}} \cdot 
\left( 
    \frac{\epsilon}{1+\delta_{cr}}\cdot\frac{m n }{\numsam} + (1+\epsilon)\frac{3\delta_{cr}}{1-\delta_{2r}} 
\right)
\end{equation}
and
\begin{align}
\tau &\le 
 \left( 
 12 \cdot \frac{1+\delta_{2r}}{1 - \delta_{cr}}  \cdot  (1+\epsilon)  \left(1 + \frac{\delta_{2r} + \delta_{cr}}{1-\delta_{2r}}\right) 
 + \frac{8(1+\delta_{2r})}{1 - \delta_{cr}} 
 \right)  \nonumber \\
 &\le  
  \frac{1+\delta_{2r}}{1 - \delta_{cr}}  \cdot  
 \left( 
 12\cdot(1+\epsilon)  \left(1 + \frac{2\delta_{cr}}{1-\delta_{2r}}\right) 
 + 8
 \right)  
 \end{align}

\end{proof}

\bibliographystyle{amsalpha}
\bibliography{rsvp}

\end{document}